\definecolor{verylight}{gray}{0.97}
\definecolor{light}{gray}{0.9}
\definecolor{medium}{gray}{0.85}
\definecolor{dark}{gray}{0.6}
\def\NZQ{\mathbb}               
\def\NN{{\NZQ N}}
\def\ZZ{{\NZQ Z}}
\def\KK{{\NZQ K}}
\def\frk{\mathfrak}               
\def\mm{{\frk m}}
\def\G{{\mathcal G}}
\def\F{{\mathcal F}}
\def\H{{\mathcal H}}
\def\tb{{\mathbf t}}
\def\opn#1#2{\def#1{\operatorname{#2}}} 
\opn\chara{char} \opn\length{\ell} \opn\pd{pd} \opn\rk{rk}
\opn\projdim{proj\,dim} \opn\injdim{inj\,dim} \opn\rank{rank}
\opn\depth{depth} \opn\grade{grade} \opn\height{height}
\opn\embdim{emb\,dim} \opn\codim{codim}
\opn\Cl{Cl}
\opn\Tr{Tr} \opn\bigrank{big\,rank}
\opn\superheight{superheight}\opn\lcm{lcm}
\opn\trdeg{tr\,deg}
	\opn\reg{reg} \opn\lreg{lreg} \opn\ini{in} \opn\lpd{lpd}
	\opn\size{size} \opn\sdepth{sdepth}
	\opn\link{link}\opn\fdepth{fdepth}\opn\lex{lex}
	\opn\tr{tr}
	\opn\type{type}
	\opn\gap{gap}
	\opn\arithdeg{arith-deg}
	\opn\revlex{revlex}
	\opn\div{div} \opn\Div{Div} \opn\cl{cl} \opn\Cl{Cl}
	\opn\Spec{Spec} \opn\Supp{Supp} \opn\supp{supp} \opn\Sing{Sing}
	\opn\Ass{Ass} \opn\Min{Min}\opn\Mon{Mon} \opn\V{V}   \opn\k{K} 
	\opn\Ann{Ann} \opn\Rad{Rad} \opn\Soc{Soc}
	\opn\Im{Im} \opn\Ker{Ker} \opn\Coker{Coker} \opn\Am{Am}
	\opn\Hom{Hom} \opn\Tor{Tor} \opn\Ext{Ext} \opn\End{End}
	\opn\Aut{Aut} \opn\id{id}
	\def\F{{\mathcal F}}
	\opn\nat{nat}
	\opn\pff{pf}
	\opn\Pf{Pf} \opn\GL{GL} \opn\SL{SL} \opn\mod{mod} \opn\ord{ord}
	\opn\Gin{Gin} \opn\Hilb{Hilb}\opn\sort{sort}
	\opn\PF{PF}\opn\Ap{Ap}
	\opn\mult{mult}
	\opn\bight{bight}
	\opn\div{div}
	\opn\Div{Div}
	\opn\aff{aff}
	\opn\relint{relint} \opn\st{st}
	\opn\lk{lk} \opn\cn{cn} \opn\core{core} \opn\vol{vol}  \opn\inp{inp} \opn\nilpot{nilpot}
	\opn\link{link} \opn\star{star}\opn\lex{lex}\opn\set{set}
	\opn\width{wd}
	\opn\Fr{F}
	\opn\QF{QF}
	\opn\G{G}
	\opn\type{type}\opn\res{res}
	\opn\conv{conv}
	\opn\Deg{Deg}
	\opn\Sym{Sym}
	\opn\Con{Con}
	\opn\gr{gr}
	\def\pot#1#2{#1[\kern-0.28ex[#2]\kern-0.28ex]}
	\opn\dirlim{\underrightarrow{\lim}}
	\opn\inivlim{\underleftarrow{\lim}}
	\let\to=\rightarrow
	\def\Implies{\ifmmode\Longrightarrow \else
		\unskip${}\Longrightarrow{}$\ignorespaces\fi}
	\def\implies{\ifmmode\Rightarrow \else
		\unskip${}\Rightarrow{}$\ignorespaces\fi}
	\def\iff{\ifmmode\Longleftrightarrow \else
		\unskip${}\Longleftrightarrow{}$\ignorespaces\fi}
	\newtheorem{Theorem}{Theorem}[section]
	\newtheorem{Lemma}[Theorem]{Lemma}
	\newtheorem{Corollary}[Theorem]{Corollary}
	\newtheorem{Proposition}[Theorem]{Proposition}
	\newtheorem{Remark}[Theorem]{Remark}
	\newtheorem{Example}[Theorem]{Example}
	\newtheorem{Definition}[Theorem]{Definition}
	\let\epsilon\varepsilon
	\let\kappa=\varkappa
	\def\qed{\ifhmode\textqed\fi
		\ifmmode\ifinner\quad\qedsymbol\else\dispqed\fi\fi}
	\def\textqed{\unskip\nobreak\penalty50
		\hskip2em\hbox{}\nobreak\hfil\qedsymbol
		\parfillskip=0pt \finalhyphendemerits=0}
	\def\dispqed{\rlap{\qquad\qedsymbol}}
	\opn\dis{dis}
	\def\pnt{{\raise0.5mm\hbox{\large\bf.}}}
	\opn\Lex{Lex}
		\opn\KT{\k^{{\scaleto{\tb \mathstrut}{8pt}}}_{{\scaleto{\V\mathstrut}{6pt}}}}
	\opn\KTT{\k^{{\scaleto{\tb \mathstrut}{8pt}}}_{{\scaleto{[\V]\mathstrut}{6pt}}}}	
\begin{document}
		\author[A. Musapa\c{s}ao\u{g}lu, M. Nasernejad and  A. A. Qureshi]{Asl\i\ Musapa\c{s}ao\u{g}lu$^{1}$ , Mehrdad ~ Nasernejad$^{2}$ and   Ayesha Asloob Qureshi$^{1}$}
		\title[-] {The edge ideals of $\tb$-spread $d$-partite hypergraphs}
		\subjclass[2010]{05E40, 13B25, 13C14, 13D02.} 
		\keywords {edge ideals of hypergraphs, Cohen-Macaulay edge ideals, linear quotients, $t$-spread ideals, strong persistence property, normally torsion-free ideals}

		\thanks{Ayesha Asloob Qureshi and Asl\i\ Musapa\c{s}ao\u{g}lu is supported by The Scientific and Technological Research Council of Turkey - T\"UB\.{I}TAK (Grant No: 122F128).}

		\thanks{E-mail addresses:  m\_{n}asernejad@yahoo.com , aqureshi@sabanciuniv.edu  and atmusapasaoglu@sabanciuniv.edu}  
		
		\maketitle
		
		\begin{center}
			{\it
			$^1$Sabanci University, Faculty of Engineering and Natural Sciences, \\
				Orta Mahalle, Tuzla 34956, Istanbul, Turkey\\
					$^2$Univ. Artois, UR 2462, Laboratoire de Math\'{e}matique de  Lens (LML),\\
				F-62300 Lens, France  \\
			}
		\end{center}
		
		\begin{abstract}
			Inspired by the definition of $\tb$-spread monomial ideals, in this paper, we introduce $\tb$-spread $d$-partite hypergraph $\KT$ and study its edge ideal $I(\KT)$. We prove that $I(\KT)$ has linear quotients, all powers of $I(\KT)$ have linear resolution and the Rees algebra of $I(\KT)$ is a normal Cohen-Macaulay domain. It is also shown that $I(\KT)$ is normally torsion-free and a complete characterization of  Cohen-Macaulay $S/I(\KT)$ is given. 
		\end{abstract}
		\vspace{0.4cm}
		

\section{Introduction}

In \cite{EHQ}, the third author together with Ene and Herzog introduced the notion of $t$-spread monomials in a polynomial ring $S=\KK[x_1, \ldots, x_n]$ over a field $\KK$ and studied some classes of ideals and $\KK$- algebras generated by $t$-spread monomials. Let $u=x_{i_1}\cdots x_{i_d}$ be a monomial in $S$ and $t \geq 0$. The monomial $u$ is called $t$-spread if $i_j-i_{j-1} \geq t$ for all $j=2, \ldots, d$. A monomial ideal $I \subset S$ is called $t$-spread if it is generated by $t$-spread monomials.  Any monomial ideal  in $S$ can be viewed as $0$-spread and any square-free monomial ideal as $1$-spread. After their first appearance in 2019, different classes of $t$-spread monomial ideals have been studied by many authors and recently in 2023, Ficarra gave a more generalized notion of $t$-spread monomials by replacing the integer $t$ with $\tb=(t_1, \ldots, t_{d-1}) \in \NN^{d-1}$, (see \cite{AF} and the reference therein). 

 In this paper, we study $\tb$-spread monomial ideals which appear as the edge ideals of certain $d$-partite hypergraphs. Let $V=\{V_1, \ldots, V_d\}$ be a partitioning of a finite set $U \subset \NN$ such that $p<q$ if $p \in V_i, q\in V_j$ with $i<j$. We call $\{i_1, \ldots, i_d\} \subset U$ a $\tb$-spread set if $i_j \in V_j$ for all $j=1, \ldots, d$ and $i_{j}-i_{j-1} \geq t_{j-1}$ for all $j=2, \ldots, d$. We call the hypergraph $\KT$ on vertex set $V(\KT)=U$, a complete $\tb$-spread $d$-partite hypergraph if all $\tb$-spread sets of $U$ are the edges of $\KT$. For $\tb=(1, \ldots, 1)$, the hypergraph $\KT$ is a complete $d$-partite hypergraph, see \cite[Example 3]{B}. The edge ideal of $\KT$, denoted by $I(\KT)$, is a $t$-spread monomial ideal generated by those monomials whose indices correspond to the edges of $\KT$. It turns out that $I(\KT)$ admits many nice algebraic and homological properties. It is shown in Theorem~\ref{thm:linearquotient} that $I(\KT)$ has linear quotients. The ideals with linear quotients were first defined by Herzog and Takayama in \cite{HT} and their free resolutions were computed as iterated mapping cones. Using the description of Betti numbers of ideals with linear quotients given in \cite{HT}, in Proposition~\ref{prop:set}, we provide an intrinsic way to compute Betti numbers of $I(\KT)$.

 In Section~\ref{powers}, we study the powers and fiber cone of $I(\KT)$. One of the main results of Section~\ref{powers} is given in \\
 
{\bf Corollary 3.7}{ \it The ideal $I(\KT)$ satisfies the strong persistence property and all powers of $I(\KT)$ have linear resolution.}\\

To prove Corollary~\ref{strong}, we first show that minimal generating set of $I(\KT)$ is sortable and $I(\KT)$ satisfies the $\ell$-exchange property with respect to sorting order, see Proposition~\ref{prop:Sortableset} and Theorem~\ref{lexchangepropertyissatisfied}. Then it follows from classical results of Fr\"oberg \cite{RF}, Sturmfels \cite{S} and Hochster \cite{MH} that the  Rees algebra $\mathcal{R}(I(\KT))$ is a normal Cohen-Macaulay domain, see Corollary~\ref{Cor2:CMdomain}. Then Corollary~\ref{strong} is obtained as an application of \cite[Corollary 1.6]{HQ} and  \cite[Corollary 10.1.8]{HH1}. We also compute the Krull dimension of fibercone $\mathcal{R}(I(\KT))/ \mm \mathcal{R}(I(\KT))$ which provides the limit depth of $S/ I(\KT)$ in Theorem~\ref{thm:limitdepth}.
 
Let $\H$ be a hypergraph with vertex set $V(\H)$. A set $T \subset V(\H)$ is called a {\em transversal} of $\H$ if it meets all the edges of $\H$ and the family of all minimal transversals of $\H$ is called the {\em transversal hypergraph} of $\H$, see \cite[Chapter 2]{B}. The minimal transversals of a hypergraph  $\H$ correspond to the minimal prime ideals of the edge ideal of $\H$. In Section~\ref{sec:CM}, we consider $\KT$ with $V=\{V_1, \ldots, V_d\}$ such that each $V_i$ is an interval of integers. The description of the minimal primes of $I(\KT)$ is obtained by computing the minimal generating set of Alexander dual of $I(\KT)$ in Theorem~\ref{TH.ASSPRIMES}. In Theorem~\ref{Normally torsion-freeness}, we prove that $I(\KT)$ is normally torsion-free which is equivalent to say that $\KT$ is a Mengerian hypergraph. A complete characterization of unmixed $I(\KT)$ is given in Theorem~\ref{thm:unmixed}. With the help of Theorem~\ref{thm:unmixed}, a complete characterization of Cohen-Macaulay $S/I(\KT)$ is obtained in Theorem~\ref{CM}.


		\section{$\tb$-spread $d$-partite hypergraphs and their edge ideals}\label{prem}
		
A finite {\it hypergraph} $\mathcal{H}$ on the vertex set $V({\mathcal{H}})=[n]$ is a collection of edges  $E({\mathcal{H}})=\{ E_1, \ldots, E_m\}$ with $E_i \subseteq V({\mathcal{H}})$  for all $i=1, \ldots,m$. A hypergraph $\mathcal{H}$ is called {\it simple}, if $E_i \subseteq  E_j$ implies  $i = j$. Simple hypergraphs are also known as {\em clutters}. Moreover, if $|E_i|=d$, for all $i=1, \ldots, m$, then $\mathcal{H}$ is called a {\em $d$-uniform} hypergraph. A $2$-uniform hypergraph $\mathcal{H}$ is just a finite simple graph. A vertex of a  hypergraph $\mathcal{H}$ is  said  to be an {\em isolated vertex}  if it is not contained in any edge of $\mathcal{H}$.

A hypergraph $\mathcal{H}$ is a $d$-partite hypergraph if its vertex set $V(\mathcal{H})$ is a disjoint union of sets $V_1, \ldots, V_d$ such that if $E$ is an edge of $\mathcal{H}$, then $|E \cap V_i | \leq 1$. In particular, if $\mathcal{H}$ is a $d$-uniform $d$-partite hypergraph with a vertex partition $V_1, \ldots, V_d$, then $|E|=d$ and $|E \cap V_i | =1$ for each $E \in E(\mathcal{H})$. In this paper, all hypergraphs are simple, uniform, and  without isolated vertices. 
		
Next, we introduce the definition of $\tb$-spread $d$-partite hypergraphs. To do this, we give the following notation. For any integers $i\leq j$, let $[i, j]:=\{k: i \leq k \leq j\}$ and for any integer $n$, we set $[n]:=\{1,\ldots, n\}$. 


		\begin{Definition}\label{def:tspread dpartite}
			Let $\H$ be a $d$-partite hypergraph with $V(\H)\subseteq  [n]$, and $V=\{V_1, \ldots, V_d\}$ be a family defining partitioning of $V(\H)$ such that if $p\in V_i$ and $q \in V_j$ with $i <j$,  then $p<q$.
			Let ${\tb}=(t_1, \ldots, t_{d-1}) \in \NN^{d-1}$. An edge $E$ of $\H$ is called a ${\tb}$-spread edge if
			\begin{enumerate}
				\item[($*$)]$E=\{i_1, i_2, \ldots, i_d\}$  with $i_j \in V_j$ for all $j=1, \ldots, d$, and $i_j -i_{j-1} \geq t_{j-1}$ for all $j=2, \ldots, d$.
			\end{enumerate}
			A $d$-partite hypergraph $\H$ is called ${\tb}$-spread  if each edge of $\H$ is $\tb$-spread. Moreover, $\H$ is called a complete $\tb$-spread $d$-partite hypergraph and denoted by $\KT$ if all $E\subseteq  V(\H)$ satisfying $(*)$ belong to $E(\H)$.
		\end{Definition}


		Let ${\bf 1}= (1, \ldots, 1)$. A complete ${\bf 1}$-spread $d$-partite hypergraph is just a complete $d$-partite hypergraph as studied in \cite{B}. The class of complete $d$-partite hypergraphs have many nice combinatorial properties. We refer reader to \cite{B} for more information.  

Let $S=\KK[x_1, \ldots, x_n]$ be a polynomial ring over a field $\KK$ and $I$ be a monomial ideal in $S$. 
		Throughout the following text, the unique minimal generating set of a monomial ideal $I$ will be denoted by $\G(I)$. The {\em support} of a monomial $u$, denoted by $\mathrm{supp}(u)$, is the set of variables that divide $u$. Moreover, we set $\mathrm{supp}(I)=\bigcup_{u \in \G(I)}\mathrm{supp}(u)$.  Let $\H$ be a hypergraph on $V(\H)=[n]$. The {\it edge ideal} of $\mathcal{H}$ is given by
		$$I(\mathcal{H}) = (\prod_{j\in E_i} x_j : E_i\in  E({\mathcal{H}})).$$
		

		\begin{Definition}\cite{AF} Let $\tb=(t_1,t_2,\ldots,t_{d-1}) \in \NN^{d-1}$. A monomial $x_{i_1} x_{i_2} \cdots x_{i_d} \in S= \KK[x_1, \ldots, x_n]$ with $i_1 \leq i_2 \leq \cdots  \leq i_d$ is called {\it $\tb$-spread} if $i_j -i_{j-1} \geq t_{j-1}$ for all $j=2, \ldots, d$.  A monomial ideal in $S$ is called a {\it $\tb$-spread monomial ideal} if it is generated by $\tb$-spread monomials. 
		\end{Definition}
	

		Note that a ${\bf 0}$-spread monomial ideal is just an ordinary monomial ideal, while a ${\bf 1}$-spread monomial ideal is just a square-free monomial ideal. When $\tb=(t, \ldots, t)$ for some fixed integer $t \geq 0$, then $\tb$-spread monomial ideal is $t$-spread introduced in \cite{EHQ}. In the following text, we will assume that $t_i \geq 1$ for all $1 \leq i \leq d-1$. It follows from the above definitions that the edge ideal of a  $\tb$-spread $d$-partite hypergraph is a $\tb$-spread monomial ideal. To illuminate these definitions, we provide the following example. 
	

		\begin{Example}
	Let $\tb=(3,2,4)$ and $\mathrm{V}=\{V_1, V_2, V_3, V_4\}$ with $V_1=\{1,2,3\}, V_2=\{5,7\}, V_3=\{8,9,11\}$ and $V_4=\{12,13\}$. Then the minimal generators of the edge ideal of $\KT$  are  as follows:
	
	\begin{center}
		\begin{tabular}{ c c c }
			$x_1x_5x_8x_{12}$\quad&\quad$x_2x_5x_8x_{12}$\\
			$x_1x_5x_8x_{13}$\quad&\quad$x_2x_5x_8x_{13}$\\
			$x_1x_5x_9x_{13}$\quad&\quad$x_2x_5x_9x_{13}$\\
			$x_1x_7x_9x_{13}$\quad&\quad$x_2x_7x_9x_{13}$\quad&\quad$x_3x_7x_9x_{13}$
			
		\end{tabular}
	\end{center}
 The ambient ring of $I(\KT)$ in this case is $S=\KK[x_1,x_2,x_3, x_5, x_7, x_8,x_9, x_{12},x_{13}]$. Indeed, we can remove $11$ from $V_3$ to exclude the isolated vertices.
		\end{Example}
	

		The edge ideals of $\KT$ have many nice algebraic and combinatorial properties. Let $I$ be a homogenous ideal in $S=\KK[x_1, \ldots, x_n]$ with graded minimal free resolution
		\begin{equation}\label{resolution}
			0 \rightarrow \mathbb{F}_p \xrightarrow[]{\phi_p } \mathbb{F}_{p-1} \rightarrow \cdots \rightarrow  \mathbb{F}_1 \xrightarrow[]{\phi_1} \mathbb{F}_0 \xrightarrow[]{\phi_0}  I \rightarrow 0,
		\end{equation}
		where for all $i=0, \ldots, p$, the free $S$-module $\mathbb{F}_i$ is equal to $\bigoplus_j S(-j)^{\beta_{i,j}(I)}$. Recall that
		$\beta_{i,j}(I)$ is the {\it $(i,j)$-th graded Betti number of $I$} and the rank of $ \mathbb{F}_i$ is called the {\it $i$-th Betti number of $I$} and denoted by $\beta_i(I)$. Then the  ideal $I$ is said to have {\em $d$-linear} resolution if  $\beta_{i,j}(I)=0$ for all $i$ and all $j \neq d$.

		We first prove that $I(\KT)$ has linear resolution. To do this, we show that $I(\KT)$ has linear quotients. Recall that an ideal $I\subset S=\KK[x_1, \ldots, x_n]$ is said to have {\em  linear quotients}   if $\G(I)$ admits an ordering $u_1, \ldots, u_r$ such that the colon ideal $(u_1, \ldots, u_{i-1}):(u_i)$ is generated by variables for all $i=2, \ldots, r$.  It is known from \cite[Theorem 1.12]{HT} or \cite[Propositon 8.2.1]{HH1} that an ideal generated in a single degree has linear resolution if it admits linear quotients. 
	

		\begin{Theorem}\label{thm:linearquotient}
			The ideal $I(\KT)$ has linear quotients. 
		\end{Theorem}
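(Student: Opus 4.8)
The plan is to prove directly that $\G(I(\KT))$ has linear quotients with respect to the lexicographic order induced by $x_1 > x_2 > \cdots > x_n$. Every $u \in \G(I(\KT))$ has a unique expression $u = x_{i_1} x_{i_2} \cdots x_{i_d}$ with $i_j \in V_j$ for all $j$; then automatically $i_1 < i_2 < \cdots < i_d$ and $i_j - i_{j-1} \geq t_{j-1}$ for $j = 2, \ldots, d$, and I identify $u$ with the strictly increasing tuple $(i_1, \ldots, i_d)$. List $\G(I(\KT)) = \{u_1, \ldots, u_r\}$ so that $u_a >_{\lex} u_b$ whenever $a < b$. Since $(u_1, \ldots, u_{s-1}) : (u_s) = \sum_{\ell < s}\bigl(u_\ell / \gcd(u_\ell, u_s)\bigr)$, it is enough to show: for every $s$ and every $\ell < s$ there is an $m < s$ such that $u_m/\gcd(u_m, u_s)$ is a single variable dividing $u_\ell/\gcd(u_\ell, u_s)$. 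This makes each monomial generator of the colon ideal divisible by a variable that itself lies in the colon ideal, so the colon ideal is generated by variables.

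So fix $s$ and $\ell < s$ and put $u = u_s = (i_1, \ldots, i_d)$, $v = u_\ell = (j_1, \ldots, j_d)$, so $v >_{\lex} u$. Let $k$ be the smallest index with $i_k \neq j_k$; then $i_1 = j_1, \ldots, i_{k-1} = j_{k-1}$, and, since all tuples are strictly increasing, $v >_{\lex} u$ means precisely $j_k < i_k$. Define
\[
w := \frac{u}{x_{i_k}}\,x_{j_k} = x_{i_1} \cdots x_{i_{k-1}}\, x_{j_k}\, x_{i_{k+1}} \cdots x_{i_d},
\]
the monomial obtained from $u$ by lowering its $k$-th index from $i_k$ to $j_k$. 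I claim that $w = u_m$ for some $m < s$, and that $w/\gcd(w, u) = x_{j_k}$ divides $v/\gcd(v, u)$, which finishes the proof.

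To see $w \in \G(I(\KT))$ I check condition $(*)$ of Definition~\ref{def:tspread dpartite} for the tuple $(j_1, \ldots, j_{k-1}, j_k, i_{k+1}, \ldots, i_d)$. First $j_k \in V_k$ because $v$ is an edge of $\KT$. The spread inequality at step $k$ holds since $v$ is $\tb$-spread and $j_{k-1} = i_{k-1}$: $j_k - i_{k-1} = j_k - j_{k-1} \geq t_{k-1}$ (vacuous if $k=1$). The spread inequality at step $k+1$ holds since $u$ is $\tb$-spread and $j_k < i_k$: $i_{k+1} - j_k > i_{k+1} - i_k \geq t_k$ (vacuous if $k=d$); in particular $i_{k-1} < j_k < i_{k+1}$, so the new tuple is still strictly increasing. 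Hence $w$ is a $\tb$-spread edge of $\KT$ and $w \in \G(I(\KT))$. Since $w$ and $u$ share their first $k-1$ indices while the $k$-th index of $w$ is $j_k < i_k$, we get $w >_{\lex} u$, so $m < s$. Finally $w$ and $u$ differ only in the $k$-th factor, so $w/\gcd(w, u) = x_{j_k}$; and $x_{j_k}$ divides $v$ but not $u$ (the indices of $u$ are $j_1 < \cdots < j_{k-1} < i_k < \cdots < i_d$, none equal to $j_k$ because $j_{k-1} < j_k < i_k$), hence $x_{j_k} \mid v/\gcd(v, u)$, as required.

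The only point that is not bookkeeping is the verification that $w$ is again a minimal generator of $I(\KT)$, and this reduces to the two displayed inequalities above: the left-hand spread gap at position $k$ is inherited verbatim from $v$, while replacing $i_k$ by the smaller $j_k$ only widens the right-hand gap at position $k+1$, and partition membership of $j_k$ is free. So I do not anticipate a real obstacle; the argument is the natural transfer to the $\tb$-spread $d$-partite setting of the classical proof that $t$-spread Veronese ideals have linear quotients with respect to the lexicographic order.
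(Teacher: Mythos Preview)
Your proof is correct and is essentially the same argument as the paper's: both order $\G(I(\KT))$ lexicographically, locate the first index $k$ at which the tuples of $u_s$ and $u_\ell$ differ, and swap $x_{i_k}$ for $x_{j_k}$ in $u_s$ to produce a $\tb$-spread generator $w >_{\lex} u_s$ with $w/\gcd(w,u_s)=x_{j_k}$ dividing $u_\ell/\gcd(u_\ell,u_s)$. Your write-up merely supplies a bit more detail (e.g.\ verifying $x_{j_k}\nmid u_s$) than the paper's version.
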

		\begin{proof}
			Let $>_{\lex}$ denote the lexicographical order induced by the total order $x_1 > x_2 > \cdots > x_n$. Furthermore, let ${\tb}=(t_1, \ldots, t_{d-1})\in \NN^{d-1}$ and set $I=I(\KT)$ and let $\G(I)=\{u_1, \ldots u_r\}$ ordered such that $u_1 >_{\lex} u_2 >_{\lex} \cdots >_{\lex} u_r$. We need to show that $(u_1, \ldots, u_{i-1}):(u_i)$ is generated by variables for all $i=2, \ldots, r$. To do this, it is enough to show that for all $1 \leq j \leq i-1$, there exists $x_p \in (u_1, \ldots, u_{i-1}):(u_i)$ such that $x_p$ divides $u_j/ \gcd(u_j,u_i)$.

			Let $j < i$ and $u_i = x_{i_1}x_{i_2}\cdots x_{i_d}$ and $u_j =x_{j_1}x_{j_2}\cdots x_{j_d} $ with $i_1< i_2 <\cdots < i_d$ and
	 $j_1 < j_2 < \cdots < j_d$.  On account of  $u_j >_{\lex} u_i$, there exists some $\ell$ such that $j_1 = i_1,j_2 = i_2, \ldots,j_{\ell-1} = i_{\ell-1}$ and $j_{\ell} < i_{\ell}$. Note that $j_{\ell} , i_{\ell} \in V_{\ell}$. Let $v=x_{j_\ell}(u_i/x_{i_\ell})=x_{i_1}x_{i_2}\cdots x_{i_{\ell-1}} x_{j_{\ell}}x_{i_{\ell+1}} \cdots x_{i_d}$. We have $j_{\ell} - i_{\ell-1} = j_{\ell} - j_{\ell-1} \geq t_{\ell -1}$ and $i_{\ell+1} - j_{\ell} \geq i_{\ell+1} - i_{\ell} \geq  t_{\ell}$. This shows that $v$ corresponds to a ${\tb}$-spread edge of $\KT$. Hence,  $v \in \G(I)$ and $v=u_k$ for some $k<i$. This completes the proof because $x_{j_{\ell}}\in (u_1, \ldots, u_{i-1}):(u_i)$ and $x_{j_{\ell}}$ divides $u_j/ \gcd(u_j,u_i)$.
		\end{proof}
			

		Let $I$ be a monomial ideal with linear quotients with respect to the ordering $u_1, \ldots, u_r$ of $\G(I)$. If $I$ is generated in a single degree $d$, 
	 then $I$ has linear resolution as shown in \cite{HT}. Following \cite{HT}, we define 
		\[
		\set(u_k)=\{i: x_i \in (u_1, \ldots, u_{k-1}):(u_k)\} \text{ for } k=2, \ldots, r.
		\]
	 Using \cite[Lemma 1.5]{HT}, we  can conclude that  
		\[
		\beta_{i,i+d}(I)=|\{\alpha \subseteq \mathrm{set}(u): u \in \G(I) \text{ and } |\alpha|=i\}|.
		\]
	

		In the following proposition,  we give a description of $\set(u)$ when $u \in \G(I(\KT))$.  For any $S \subseteq  [n]$, we set $\min S$ to be the smallest integer in $S$,  and $\max S$ to be the largest integer in $S$. 
	

		\begin{Proposition}\label{prop:set}
			Let $u=x_{k_1}x_{k_2}\cdots x_{k_d} \in \G(I(\KT))$ with $\tb =(t_1,t_2,\ldots,t_{d-1})$ and $i_1= \min V_1$. With the notations introduced above, $\set(u)$
			is the union of $[i_1, k_1-1]\cap V_1$ and
			$
			[k_{j-1}+t_{j-1}, k_{j}-1]\cap V_{j} \text{ for } j=2, \ldots, d.
			$
		\end{Proposition}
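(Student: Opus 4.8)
The plan is to unwind the definition of $\set(u)$ directly. Recall from the proof of Theorem~\ref{thm:linearquotient} that $\G(I(\KT))=\{u_1,\dots,u_r\}$ is ordered so that $u_1>_{\lex}\cdots>_{\lex}u_r$, and that $\set(u_m)=\{p:x_p\in(u_1,\dots,u_{m-1}):(u_m)\}$. Write $u=x_{k_1}\cdots x_{k_d}$ with $k_1<\cdots<k_d$ and $k_j\in V_j$, and let $m$ be the index with $u=u_m$. The first step is a standard reduction to a single generator: every element of $\G(I(\KT))$ is squarefree of degree $d$, so if $x_p\mid u$ then every squarefree degree-$d$ divisor of $x_pu$ divides $u$, hence equals $u$, and thus $x_p\notin\set(u)$. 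If $x_p\nmid u$, then $x_pu$ is squarefree of degree $d+1$, and its only squarefree degree-$d$ divisors are $u$ itself and the monomials $w_\ell:=x_pu/x_{k_\ell}$ for $\ell=1,\dots,d$; note $w_\ell\neq u$ since $x_p\nmid u$. As $x_pu=x_{k_\ell}w_\ell$, we conclude that $x_p\in\set(u)$ if and only if $w_\ell\in\G(I(\KT))$ and $w_\ell>_{\lex}u$ for some $\ell\in\{1,\dots,d\}$.

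The second step is to translate these two conditions combinatorially. The support of $w_\ell$ is $\{k_1,\dots,k_{\ell-1},p,k_{\ell+1},\dots,k_d\}$, and for $w_\ell$ to be a generator this set must be a $\tb$-spread edge, so in particular it must meet each $V_j$ in exactly one element. Since the parts are linearly ordered (if $a\in V_i$, $b\in V_j$ and $i<j$, then $a<b$), this forces $p\in V_\ell$, and then $k_{\ell-1}<p<k_{\ell+1}$, so the sorted support of $w_\ell$ is $k_1<\cdots<k_{\ell-1}<p<k_{\ell+1}<\cdots<k_d$. All spread inequalities not involving the $\ell$-th entry are inherited from $u$, so $w_\ell\in\G(I(\KT))$ if and only if $p\in V_\ell$, $p-k_{\ell-1}\ge t_{\ell-1}$ (when $\ell\ge2$), and $k_{\ell+1}-p\ge t_\ell$ (when $\ell\le d-1$). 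For the lexicographic comparison, $w_\ell$ and $u$ have the same first $\ell-1$ variables $x_{k_1},\dots,x_{k_{\ell-1}}$ and differ in the $\ell$-th, where $w_\ell$ has $x_p$ while $u$ has $x_{k_\ell}$; hence $w_\ell>_{\lex}u$ if and only if $p<k_\ell$.

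Finally, fix the unique $\ell$ with $p\in V_\ell$ and combine: $x_p\in\set(u)$ if and only if $p<k_\ell$, together with $p-k_{\ell-1}\ge t_{\ell-1}$ when $\ell\ge2$ and $k_{\ell+1}-p\ge t_\ell$ when $\ell\le d-1$. The last inequality is automatic once $p<k_\ell$, since then $k_{\ell+1}-p>k_{\ell+1}-k_\ell\ge t_\ell$. Therefore for $\ell=1$ the condition reads $p\in V_1$ and $i_1\le p\le k_1-1$ (using $i_1=\min V_1\le p$), while for $2\le\ell\le d$ it reads $p\in V_\ell$ and $k_{\ell-1}+t_{\ell-1}\le p\le k_\ell-1$; taking the union over $\ell$ yields exactly the claimed description of $\set(u)$. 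I expect the only delicate points to be keeping the direction of $>_{\lex}$ straight and treating the boundary indices $\ell=1$ and $\ell=d$, where one of the two spread inequalities is vacuous; beyond this bookkeeping there is no real obstacle.
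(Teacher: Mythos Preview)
Your proof is correct and follows essentially the same approach as the paper's: both identify a witness $v=x_pu/x_{k_\ell}$ with $p\in V_\ell$, check that $v$ is $\tb$-spread precisely under the stated inequalities, and use $p<k_\ell$ for the lex comparison. Your version is somewhat more self-contained---you explicitly classify all degree-$d$ squarefree divisors of $x_pu$ and dispose of the case $x_p\mid u$, whereas the paper appeals to Theorem~\ref{thm:linearquotient} to jump directly to a $v$ with $(v):(u)=(x_\ell)$---but the substance is the same.
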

		
		\begin{proof} 
	Let $\ell \in \set(u)$. Following Theorem~\ref{thm:linearquotient}, there exists $v \in \G(I(\KT))$ such that $v>_{\lex} u$ and $(v):(u)= (x_{\ell})$. This gives $v=(u / x_{k_j}) x_{\ell}$ for some $1 \leq j \leq d$ and $x_{k_j}, x_{\ell} \in V_j$. Since $v>_{\lex} u$, we must have $\ell \leq  k_j-1$. If $j =1$,  then $\ell \in [i_1,k_1-1]$. Moreover, if $2 \leq j \leq d$, then $k_{j-1}+t_{j-1} \leq \ell$ because $v$ is a $\tb$-spread monomial,  and hence $\ell \in 	[k_{j-1}+t_{j-1}, k_{j}-1]\cap V_{j} $. \par 
 On the other hand, if $\ell \in [i_1, k_1-1]\cap V_1$ or  $\ell \in [k_{j-1}+t_{j-1}, k_j-1]\cap V_j$ for any $j=2, \ldots, d$, then set $v=(u/x_{k_j})x_\ell$ for all $j=1, \ldots, d$. In both cases, $v \in \G(I(\KT))$ and  $v >_{\lex} u$. Therefore, $x_{\ell} \in  (v):(u)$, and hence $\ell \in \set(u)$, as required.
		\end{proof}


\section{The powers and the fiber cone of $I(\KT)$} \label{powers}

Let $\KK$ be a field and $S_d$ be the $\KK$-vector space generated by all monomials of degree $d$ in the polynomial ring $S=\KK[x_1, \ldots, x_n]$. Let $u,v \in S_d$ and $uv = x_{i_1} x_{i_2} \cdots x_{i_{2d}}$ with $i_1 \leq i_2 \leq \cdots \leq i_{2d-1} \leq i_{2d}$. Set $u'=x_{i_1} x_{i_3}  \cdots x_{i_{2d-1}}$ and $v'=x_{i_2} x_{i_4}  \cdots x_{i_{2d}}$. The map
		\begin{equation*}
			\mathrm{sort}: S_d \times S_d \rightarrow S_d \times S_d \text{ which maps } (u,v) \mapsto (u',v'), 
		\end{equation*}
		is called the {\em sorting operator}. A pair $(u,v) \in S_d \times S_d$ is called {\em sorted} if $\mathrm{sort}(u,v)=(u',v')$. A subset $A \subset S_d$ is called {\em sortable} if $\mathrm{sort}(A \times A) \subseteq  A \times A$. Furthermore, an $r$-tuple of monomials $(u_1, \ldots, u_r) \in S_d^r$ is called sorted if for any $1 \leq i < j \leq n$, the pair $(u_i,u_j)$ is sorted. In other words, if we write the monomials $(u_1, \ldots,u_r)$ as $u_1=x_{i_1}\cdots x_{i_d}$, $u_2= x_{j_1}\cdots x_{j_d} ,\; \ldots \;, u_r=x_{l_1}\cdots x_{l_d}$, then $(u_1, \ldots, u_r)$ is sorted if and only if
		\begin{equation}\label{Eq:sortingcondition}
			i_1 \leq j_1 \leq \cdots \leq l_1 \leq i_2 \leq j_2 \leq \cdots \leq l_2 \leq \cdots \leq i_d \leq j_d \leq \cdots \leq l_d.
		\end{equation}
	

		\begin{Proposition}\label{prop:Sortableset}
			The set $\G(I(\KT))$ is sortable. 
		\end{Proposition}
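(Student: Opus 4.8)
The plan is to take two $\tb$-spread edges $u = x_{i_1}\cdots x_{i_d}$ and $v = x_{j_1}\cdots x_{j_d}$ of $\KT$ (so $i_\ell, j_\ell \in V_\ell$ for all $\ell$, with the spread conditions $i_\ell - i_{\ell-1} \geq t_{\ell-1}$ and $j_\ell - j_{\ell-1} \geq t_{\ell-1}$), apply the sorting operator to get $(u', v')$, and show that both $u'$ and $v'$ again correspond to $\tb$-spread edges of $\KT$. Since $\G(I(\KT))$ consists of all monomials attached to $\tb$-spread edges, this suffices.

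First I would set up the sorting explicitly. Write $uv = x_{k_1}\cdots x_{k_{2d}}$ with $k_1 \leq \cdots \leq k_{2d}$; then $u' = x_{k_1}x_{k_3}\cdots x_{k_{2d-1}}$ and $v' = x_{k_2}x_{k_4}\cdots x_{k_{2d}}$. The key structural observation is that the partition $V = \{V_1, \ldots, V_d\}$ respects the linear order on $U$: every element of $V_1$ is smaller than every element of $V_2$, etc. Since $u$ and $v$ each contribute exactly one index from each $V_\ell$, the multiset $\{k_1, \ldots, k_{2d}\}$ contains exactly two indices from each block $V_\ell$, and after sorting these two indices occupy positions $2\ell-1$ and $2\ell$. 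Consequently $k_{2\ell-1}$ and $k_{2\ell}$ are precisely $\{i_\ell, j_\ell\}$ sorted, say $k_{2\ell-1} = \min(i_\ell, j_\ell)$ and $k_{2\ell} = \max(i_\ell, j_\ell)$. Thus $u'$ picks up $\min(i_\ell, j_\ell)$ in slot $\ell$ and $v'$ picks up $\max(i_\ell, j_\ell)$ in slot $\ell$; in particular both $u'$ and $v'$ have exactly one variable from each $V_\ell$, so $(\ast)$'s partition condition holds automatically for both.

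It remains to verify the spread inequalities. For $u'$, consecutive indices are $\min(i_{\ell-1}, j_{\ell-1})$ and $\min(i_\ell, j_\ell)$. Without loss of generality $\min(i_\ell, j_\ell) = i_\ell$; then $\min(i_\ell, j_\ell) - \min(i_{\ell-1}, j_{\ell-1}) = i_\ell - \min(i_{\ell-1}, j_{\ell-1}) \geq i_\ell - i_{\ell-1} \geq t_{\ell-1}$, using that $\min(i_{\ell-1}, j_{\ell-1}) \leq i_{\ell-1}$. The same argument with $\max$ in place of $\min$ handles $v'$: if $\max(i_\ell, j_\ell) = i_\ell$ then $\max(i_\ell, j_\ell) - \max(i_{\ell-1}, j_{\ell-1}) \geq i_\ell - i_{\ell-1} \geq t_{\ell-1}$ since $\max(i_{\ell-1}, j_{\ell-1}) \leq i_\ell$ would need care, so instead I would note $i_\ell - \max(i_{\ell-1},j_{\ell-1})$: if $\max(i_{\ell-1},j_{\ell-1}) = i_{\ell-1}$ this is $\geq t_{\ell-1}$ directly, and if $\max(i_{\ell-1},j_{\ell-1}) = j_{\ell-1}$, then since $\max(i_\ell,j_\ell)=i_\ell \geq j_\ell \geq j_{\ell-1} + t_{\ell-1}$, we again get the bound. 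So the only mildly fiddly point — and the part I'd expect to need the most care — is this case analysis for $v'$, keeping track of which of $i, j$ realizes the max at each level; but it reduces in every case to one of the two original spread inequalities. Hence $u', v' \in \G(I(\KT))$, so $\G(I(\KT))$ is sortable. $\qed$
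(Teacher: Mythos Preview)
Your argument is correct and follows essentially the same route as the paper's: both observe that sorting places the two indices from $V_\ell$ into positions $2\ell-1$ and $2\ell$, and then verify the $\tb$-spread gap for $u'$ and $v'$ by reducing to one of the two original spread inequalities. The paper packages the gap check slightly differently (a pigeonhole on three consecutive sorted indices $i_{2\ell-1},i_{2\ell},i_{2\ell+1}$, two of which must come from the same original monomial), but the content is identical to your $\min/\max$ case split.
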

		
		\begin{proof}
Assume that  $u,v \in \G(I(\KT))$ and $uv=x_{i_1} x_{i_2}x_{i_3}x_{i_4}\cdots x_{i_{2d-1}} x_{i_{2d}}$ with $i_1\leq i_2 \leq  \cdots \leq i_{2d}$. Since $\supp(u)$ and $\supp(v)$ correspond to the edges of $\KT$, it follows that $i_1,i_2 \in V_1, i_3,i_4 \in V_2, \ldots , i_{2d-1},i_{2d} \in V_d$. Consequently, $u'=x_{i_1}x_{i_3} \cdots x_{i_{2d-1}} $ and $v'=x_{i_2}x_{i_4} \cdots x_{i_{2d}} $ are monomials associated to the edges of a complete $d$-partite hypergraph. It only remains to show that $u'$ and $v'$ are $\tb$-spread. We show that $u'$ is a $\tb$-spread monomial and the argument for $v'$ follows in a similar fashion. For any $1\leq l \leq d-1$, we have $i_{2l-1} \leq i_{2l} \leq i_{2l+1}$ and at least two of the variables among $x_{i_{2l-1}}, x_{i_{2l}}, x_{i_{2l+1}}$ belong to either $\supp(u)$ or $\supp(v)$. Using the fact that $u$ and $v$ are $\tb$-spread monomials, this implies that $i_{2l+1}-i_{2l-1} \geq i_{2l+1}-i_{2l}$ and  $i_{2l+1}-i_{2l-1} \geq i_{2l}-i_{2l-1}$, we obtain the desired conclusion. 
		\end{proof}
		

		Let  $I\subset S$ be an ideal generated by the monomials of same degree. Here, set  $T=\KK[\{t_u : u\in \G(I)\}]$ and $\KK[I]=\KK[u: u \in \G(I)]$. Consider the $\KK$-algebra homomorphism
		\begin{equation*}\label{mapForDefiningIdeal}
			\phi: T \rightarrow \KK[I] \text{ defined by } t_u \mapsto u \text{ for } u\in \G(I).
		\end{equation*}

	
		The kernel of $\phi$ is called the {\em defining ideal} of $\KK[I]$. If $\G(I)$ is a sortable set, then it follows from \cite{S} or 
	 \cite[Theorems 6.15 and  6.16]{VH} that there exists a monomial order $<_{\sort}$ such that the defining ideal of $\KK[I]$ admits the reduced Gr\"obner basis  consisting of binomials of the form $t_u t_v - t_{u'} t_{v'}$,  where $\mathrm{sort}(u,v) = (u',v')$.


		\begin{Corollary}\label{Cor1:CMdomain}
			The $\KK$-algebra $\KK[I(\KT)]$ is a Koszul and Cohen-Macaulay normal domain.
		\end{Corollary}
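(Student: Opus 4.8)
The plan is to deduce everything from the structural result recalled just before the statement: since $\G(I(\KT))$ is sortable (Proposition~\ref{prop:Sortableset}), the defining ideal of $\KK[I(\KT)]$ has a reduced Gr\"obner basis consisting of the sorting binomials $t_ut_v - t_{u'}t_{v'}$ with respect to the monomial order $<_{\sort}$. The first observation is that each such binomial is a difference of two \emph{squarefree} (in fact, degree-two) monomials in the $t$-variables, and the initial monomial $t_ut_v$ (the nonsorted pair) is squarefree. Thus the initial ideal $\ini_{<_{\sort}}$ of the defining ideal is a squarefree monomial ideal. By a theorem of Sturmfels (and Hochster), a squarefree initial ideal forces the toric ring to be normal, and moreover the initial ideal, being squarefree, corresponds to a simplicial complex; one then invokes Hochster's criterion together with the shellability of the sorting simplicial complex (this is the classical argument of Sturmfels in \cite{S}, also presented in \cite[Chapter 6]{VH} and in \cite{HH1}) to conclude that $\KK[I(\KT)]$ is Cohen--Macaulay. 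For the domain property, $\KK[I(\KT)] = \KK[u : u \in \G(I(\KT))]$ is by definition a subalgebra of the polynomial ring $S$, hence is a domain; equivalently it is $T$ modulo a toric (prime) ideal.

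For Koszulness, the key point is that the sorting Gr\"obner basis consists of \emph{quadratic} binomials. A standard result (Fr\"oberg, see \cite{RF}, or \cite[Theorem 6.7]{EHQ}, or \cite{HH1}) states that if the defining ideal of a standard graded $\KK$-algebra has a Gr\"obner basis of quadrics with respect to some monomial order, then the algebra is Koszul. Since $\KK[I(\KT)]$ is generated in a single degree, it is standard graded after rescaling, and the sorting relations are homogeneous quadrics, so Koszulness follows immediately.

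Concretely, the steps I would carry out are: (1) quote the sortability of $\G(I(\KT))$ from Proposition~\ref{prop:Sortableset}; (2) invoke the structure theorem recalled in the paragraph before the corollary to get the reduced Gr\"obner basis of sorting binomials $t_ut_v - t_{u'}t_{v'}$ for the order $<_{\sort}$; (3) observe these are quadratic binomials, hence $\KK[I(\KT)]$ is Koszul by Fr\"oberg's theorem; (4) observe the leading terms are squarefree monomials, hence by Sturmfels' theorem \cite[Corollary 13.6, Theorem 13.15]{S} (equivalently Hochster's theorem together with shellability of the associated complex) the algebra $\KK[I(\KT)]$ is normal and Cohen--Macaulay; (5) note it is a domain since it sits inside $S$. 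None of these steps requires new computation — the real content was already done in Proposition~\ref{prop:Sortableset}, so this corollary is essentially a citation of the Sturmfels--Hochster--Fr\"oberg machinery applied to sortable sets.

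The only point that deserves a sentence of care, and which I expect to be the sole mild obstacle, is making sure the general theorem is being applied correctly: specifically that the sorting order $<_{\sort}$ is indeed a legitimate monomial order on $T$ and that the binomials listed do form the \emph{reduced} Gr\"obner basis (so that $\ini_{<_{\sort}}$ is generated precisely by the unsorted quadratic monomials $t_ut_v$), since normality via Sturmfels' criterion needs the squarefree \emph{initial} ideal, not just a squarefree generating set of relations. This is exactly the content of the cited results \cite{S}, \cite[Theorems 6.15 and 6.16]{VH}, so it suffices to point to them; there is nothing further to prove.
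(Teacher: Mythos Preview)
Your proposal is correct and follows essentially the same route as the paper: invoke sortability (Proposition~\ref{prop:Sortableset}) to get a quadratic squarefree Gr\"obner basis for the defining ideal, then deduce Koszulness from Fr\"oberg, normality from Sturmfels, and Cohen--Macaulayness from Hochster. The only cosmetic difference is that the paper cites Hochster's theorem to pass from normality to Cohen--Macaulayness (rather than bundling normality and CM together under Sturmfels' shellability argument), but the logical content is identical.
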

		\begin{proof}
			As discussed above, with respect to $>_{\sort}$, the Gr\"obner basis of the defining ideal of $\KK[I(\KT)]$ contains quadratic binomials. Due to Fr\"oberg \cite{RF} , we conclude that $\KK[I(\KT)]$ is Koszul and due to a theorem of Sturmfels  \cite{S} we obtain $\KK[I(\KT)]$ is normal, see also \cite[Theorem 5.16]{VH}. Therefore, $\KK[I(\KT)]$ is Cohen-Macaulay domain by \cite[Theorem 1]{MH}.
		\end{proof}
		
		
		Our next goal is to establish  $I(\KT)$ has  the strong persistence property and its powers have linear resolution. Remember  an ideal $I$ is said to satisfy the {\em strong persistence} property if $(I^{k+1} : I) = I^k$ for all $k\geq 1$,  see \cite{HQ} for more information. In addition,  an ideal $I$ is said to satisfy the  {\em persistence property} if:
		\begin{equation*}
			\mathrm{Ass}(I) \subseteq  \mathrm{Ass}(I^2) \subseteq  \cdots \subset  \mathrm{Ass}(I^k) \subseteq  \cdots. 
		\end{equation*}
		In \cite{HQ}, it is proved that an ideal with strong persistence property has the  persistence property.
		
		To achieve our goal, we first recall the definition of $l$-exchange property, see \cite{HHV} or \cite[Sec 6.4]{VH} for more details. Let $T$ and $\phi$ be the same as above and $<$ be a monomial order defined on $T$. A monomial $t_{u_1}t_{u_2}\cdots t_{u_N} \in T$ is called a {\em standard monomial} of $\ker\phi$ with respect to $<$, if $t_{u_1}t_{u_2}\cdots t_{u_N} \notin \ini_<(\ker\phi)$.
	

		\begin{Definition}\label{def:lexchangeprop}
			The monomial ideal $I \subset S$ is said to satisfy the $l$-exchange property with respect to the monomial order $<$ on $T$ if the following two conditions hold: let $t_{u_1}t_{u_2}\cdots t_{u_N} $ and $ t_{v_1}t_{v_2}\cdots t_{v_N} $ be two standard monomials of $\ker \phi$ with respect to $<$ such that
			\begin{enumerate}
				\item[(i)] $\deg_{x_i}$ $u_1 u_2 \cdots u_N = \deg_{x_i}$ $v_1 v_2 \cdots v_N$,  for $i=1,\ldots, q-1$  and $q\leq n-1$,
				\item[(ii)] $\deg_{x_q}$ $u_1 u_2 \cdots u_N$ $<$ $\deg_{x_q}$ $v_1 v_2 \cdots v_N$. 
			\end{enumerate}
			Then there exists some $j$ and $\alpha$ with $q < j \leq n$ such that $x_q u_{\alpha} / x_j \in I$.
		\end{Definition}
	

		\begin{Theorem}\label{lexchangepropertyissatisfied}
			The ideal $I(\KT)$ satisfies the $l$-exchange property with respect to the sorting order $<_{\sort}$.
		\end{Theorem}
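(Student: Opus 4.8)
The plan is to verify Definition~\ref{def:lexchangeprop} directly for $I=I(\KT)$ using the explicit combinatorial structure of the sorted tuples. First I would recall, via Proposition~\ref{prop:Sortableset} and the remarks preceding the theorem, that a monomial $t_{u_1}\cdots t_{u_N}$ is standard with respect to $<_{\sort}$ precisely when the tuple $(u_1,\ldots,u_N)$ can be rearranged into a sorted tuple; equivalently, after relabeling $u_1\le_{\sort}\cdots\le_{\sort}u_N$ (here I use that $\ini_{<}(\ker\phi)$ is generated by the non-sorted pairs $t_ut_v$), the indices satisfy the chain inequalities~(\ref{Eq:sortingcondition}). Writing $u_a=x_{p^{(a)}_1}x_{p^{(a)}_2}\cdots x_{p^{(a)}_d}$ with $p^{(a)}_1<\cdots<p^{(a)}_d$, the sortedness of $(u_1,\ldots,u_N)$ means that for each block $k\in[d]$ the multiset $\{p^{(1)}_k,\ldots,p^{(N)}_k\}$ is sorted in increasing order and all of these are $\le p^{(1)}_{k+1}$; in fact since every $u_a$ is an edge of $\KT$, the $k$-th coordinates all lie in $V_k$, so the block structure is automatic and the only real content of sortedness is the monotonicity within each block and across consecutive blocks.

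Next I would translate hypotheses (i) and (ii) into a statement about these coordinates. Let $U=u_1\cdots u_N$ and $W=v_1\cdots v_N$; since $\deg_{x_i}U=\deg_{x_i}W$ for $i<q$ and $\deg_{x_q}U<\deg_{x_q}W$, the variable $x_q$ lies in some $V_k$, and in the sorted arrangement $x_q$ occupies a specific range of positions in block $k$ of the $v$-tuple; crucially, the $u$-tuple has strictly fewer copies of $x_q$ than the $w$-tuple. By comparing the sorted coordinate sequences of $U$ and $W$ coordinate-by-coordinate, the first place they differ forces some $u_\alpha$ to have its $k$-th coordinate equal to some $x_j$ with $j>q$ (where $j\in V_k$ as well), while the corresponding slot on the $v$-side is $x_q$ — this is the standard ``defect'' argument for the $\ell$-exchange property. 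Then the candidate replacement is $u'=x_q u_\alpha / x_j$; I must check $u'\in I(\KT)$, i.e. that swapping the $k$-th coordinate of $u_\alpha$ from $j$ down to $q$ keeps it a $\tb$-spread edge. Here I would use that on the $v$-side there is already a legal edge with $k$-th coordinate $q$ sitting between the same $(k-1)$-st and $(k+1)$-st coordinate neighborhoods, together with the monotonicity of the sorted sequences, to conclude $p^{(\alpha)}_{k-1}+t_{k-1}\le q$ and $q+t_k\le p^{(\alpha)}_{k+1}$ (the latter when $k<d$, the former when $k>1$); the edge cases $k=1$ and $k=d$ only need the one-sided inequality, which is easier.

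The main obstacle I anticipate is the bookkeeping in the coordinate-comparison step: pinning down \emph{which} index $\alpha$ and \emph{which} larger index $j$ arise, and simultaneously guaranteeing that the $\tb$-spread inequalities survive the downward swap $j\mapsto q$. The delicate point is that $q$ is the \emph{first} coordinate value where $U$ and $W$ disagree in multiplicity, so all coordinate values $<q$ appear with equal multiplicity in the two sorted sequences; this lets me align the two sorted sequences up to the block and position where $q$ first appears on the $v$-side, and there the $u$-side must show a value $>q$. To get the spread inequalities for $u'$, I would look at the sorted coordinate sequence of $W$ restricted to blocks $k-1,k,k+1$: since some $v_\beta$ realizes the triple $(\cdot,q,\cdot)$ legally and the $(k-1)$-st coordinates of the $u$-tuple and $v$-tuple that precede position $\alpha$ agree (by the equal-multiplicity hypothesis below $q$, bearing in mind $p^{(\alpha)}_{k-1}<q$), we get $p^{(\alpha)}_{k-1}\le$ (some $(k-1)$-coordinate on the $v$-side that is followed by $q$), whence $p^{(\alpha)}_{k-1}+t_{k-1}\le q$; symmetrically, $p^{(\alpha)}_{k+1}$ dominates a $(k+1)$-coordinate on the $v$-side preceded by $q$, giving $q+t_k\le p^{(\alpha)}_{k+1}$. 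Once $u'\in I(\KT)$ is established, taking $j$ as above finishes the verification of the $\ell$-exchange property, and I would close by noting this is exactly the hypothesis needed to feed into the sortability/normality machinery already invoked in Corollary~\ref{Cor1:CMdomain}.
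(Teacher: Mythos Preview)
Your approach is essentially the paper's, but you are circling around the one observation that makes everything short. From sortedness together with condition~(i), the paper notes that for \emph{every} $\gamma$ one has $\deg_{x_i}u_\gamma=\deg_{x_i}v_\gamma$ for all $i<q$; in particular the specific $u_\alpha$ and $v_\alpha$ that you locate share their first $p-1$ coordinates $j_1,\ldots,j_{p-1}$, and $v_\alpha$ has $q$ in slot $p$ while $u_\alpha$ has some $j_p>q$ there. With this in hand there is no need to reach for ``some $v_\beta$'' or monotonicity arguments across the two tuples: the left spread inequality $q-j_{p-1}\ge t_{p-1}$ is read directly off $v_\alpha$.

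Your sketch of the right inequality has a small but real gap. The claim that ``symmetrically, $p^{(\alpha)}_{k+1}$ dominates a $(k+1)$-coordinate on the $v$-side preceded by $q$'' is not justified: hypotheses (i)--(ii) give you no control over the $v$-side in degrees above $q$, so you cannot compare $p^{(\alpha)}_{k+1}$ with any $(k+1)$-coordinate of the $v$-tuple. Fortunately no such comparison is needed. Since $j_p>q$ and $u_\alpha$ is already $\tb$-spread, one has $j_{p+1}-q>j_{p+1}-j_p\ge t_p$, which is exactly the paper's one-line verification. Replace your ``symmetric'' step with this and the argument is complete.
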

		
		\begin{proof}
			Let $t_{u_1}t_{u_2}\cdots t_{u_N} $ and $ t_{v_1}t_{v_2}\cdots t_{v_N} $ be two standard monomials of $\ker \phi$ with respect to $<_{\sort}$ and ${\tb} = (t_1,t_2,\ldots,t_{d-1})$. It can be seen from Proposition~\ref{prop:Sortableset} together with (\ref{Eq:sortingcondition}) that the $N$-tuples with ${\tb}$-spread monomials $(u_1, u_2, \ldots, u_N)$ and $(v_1,v_2, \ldots, v_N)$ are sorted. Assume that the products $u_1u_2 \cdots u_N$ and $v_1v_2 \cdots v_N$ satisfy both conditions in the Definition~\ref{def:lexchangeprop}. The condition (i) together with (\ref{Eq:sortingcondition}) gives
			\begin{equation}\label{Cond:1}
				\deg_{x_i}u_{\gamma} =  \deg_{x_i} v_{\gamma} \text{, for }1\leq i\leq q-1 \text{ and for all } 1 \leq \gamma \leq N,  
			\end{equation}
			and the condition (ii) of Definition \ref{def:lexchangeprop} implies that there exists $\alpha$ with $1\leq \alpha \leq N$ such that 
			\begin{equation}\label{Cond:2}
				\deg_{x_q} u_{\alpha} < \deg_{x_q} v_{\alpha}.
			\end{equation}
			Following (\ref{Cond:1}) and (\ref{Cond:2}), we can write 
	 $$u_{\alpha} = x_{j_1}x_{j_2}\cdots x_{j_p}\cdots x_{j_d}~~\text{and}~~ v_{\alpha} = x_{j_1}x_{j_2}\cdots x_{j_{p-1}} x_q x_{k_{p+1}}\cdots x_{k_d}, $$
			with $j_p > q$. To complete the proof, it is enough to show that $w=x_q u_{\alpha} / x_{j_p} \in I(\KT)$. Note that $q$ and $j_p$ belong to $V_p$. Moreover, $q - j_{p-1}\geq t_{p-1}$ because $v_{\alpha}$ is ${\tb}$-spread and $j_{p+1}-q \geq j_{p+1}-j_p \geq t_p$  because $j_p > q$. This yields that $w$ is a ${\tb}$-spread monomial, as desired. 
		\end{proof}
		

		Let $I = I(\KT)$ and $R=S[\{t_u: u \in \G(I)\}]$. We define a monomial order on $R$ as following: if $u_1,u_2 \in S$ and $v_1, v_2 \in T$, then $u_1 v_1 > u_2 v_2$ if and only if $u_1 >_{\lex} u_2$ or $u_1 = u_2$ and $v_1 >_{\sort} v_2$, where  $>_{\lex}$ denotes  the lexicographical order on $S$ induced by $x_1 > \cdots > x_n$. Let $\mathcal{R}(I) = \oplus_{j\geq 0}I^{j} t^j \subseteq S[t]$ be the Rees ring of $I$. The Rees ring $\mathcal{R}(I)$ has the  following presentation
		\begin{equation*}
			\psi:R= S[\{t_u : u \in \G(I)\}] \rightarrow \mathcal{R}(I),
		\end{equation*}
		with $	x_i \mapsto x_i \text{\;\;for\;\;} 1\leq i \leq n \text{\;\;and\;\; } t_u \mapsto ut \text{\;\;for\;\;} u \in \G(I)$.
		Let $P=\ker \psi$. Then we have the  next  result. 
	

		\begin{Corollary}\label{binomialGrobnerBasis}
			Let $>$ be the monomial order on $R$ as defined above. The reduced Gr\"obner basis of $P$ consists of the binomials of the following form:
			\begin{enumerate}
				\item $t_ut_v - t_{u'}t_{v'}$, where $sort(u,v)= (u',v')$; 
				\item $x_it_u-x_jt_v$, where $i<j$, $x_iu=x_jv$, and $j$ is the largest integer for which $x_i v / x_j \in \G(I)$.
			\end{enumerate}
		\end{Corollary}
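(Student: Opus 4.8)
The plan is to obtain the statement as an application of the general theory of Rees algebras of sortable monomial ideals satisfying the $\ell$-exchange property, as developed in \cite{HHV} (see also \cite[Section~6.4]{VH}). By Proposition~\ref{prop:Sortableset} the set $\G(I(\KT))$ is sortable, and by Theorem~\ref{lexchangepropertyissatisfied} the ideal $I(\KT)$ satisfies the $\ell$-exchange property with respect to the sorting order $<_{\sort}$; these are exactly the two inputs required. Thus the real content of the corollary is to match the Gr\"obner basis predicted by that theory with the two explicit families of binomials in the statement and to check reducedness for the particular monomial order $>$ on $R$ defined above.

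First I would recall that the type~(1) binomials $t_ut_v-t_{u'}t_{v'}$, with $\sort(u,v)=(u',v')$, already form the reduced Gr\"obner basis of $\ker\phi$ with respect to $<_{\sort}$ --- this is Sturmfels' sorting theorem \cite{S} quoted in the discussion preceding Corollary~\ref{Cor1:CMdomain} --- and their initial terms are the $t_ut_v$ with $(u,v)$ unsorted. Since $P\cap T=\ker\phi$ and the order $>$ restricts to $<_{\sort}$ on $T$, these binomials belong to $P$ with the stated leading monomials. The type~(2) binomials $x_it_u-x_jt_v$ with $i<j$ and $x_iu=x_jv$ lie in $P$ because $\psi(x_it_u-x_jt_v)=(x_iu-x_jv)t=0$, and because $i<j$ the order $>$ picks out $x_it_u$ as the leading term.

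Next I would verify Buchberger's criterion for the union of the two families. The $S$-polynomials among type~(1) binomials reduce to zero since type~(1) is already a Gr\"obner basis of $\ker\phi$; the $S$-polynomials among type~(2) binomials, and the mixed ones between a type~(1) and a type~(2) binomial, are exactly where Theorem~\ref{lexchangepropertyissatisfied} enters: an overlap forcing a variable $x_q$ to occur to higher degree on one side triggers the $\ell$-exchange property, which hands back a generator of the form $x_qu_\alpha/x_j\in\G(I(\KT))$ for some $j>q$, and the corresponding type~(2) relation reduces the $S$-polynomial. (Alternatively one may quote this step directly from \cite{HHV}, \cite[Section~6.4]{VH}, where sortability together with the $\ell$-exchange property with respect to $<_{\sort}$ is shown to force the Rees ideal to possess precisely this Gr\"obner basis.) For reducedness, one notes that every listed binomial is monic in both monomials, and that the stipulation in~(2) that $j$ be the \emph{largest} integer with $x_iv/x_j\in\G(I(\KT))$ is exactly what prevents a leading term from dividing a trailing term of another element of the list; hence the basis is reduced.

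The step that merits the most attention --- rather than a genuine obstacle --- is the bookkeeping around conventions: one must check that the order $>$ on $R$ (lexicographic in the $x_i$ first, then $<_{\sort}$ on the $t_u$) is of the ``sorting-compatible'' type for which the cited theorem guarantees a Gr\"obner basis of the stated shape, and that the normalization imposed on the type~(2) binomials (maximality of $j$) is the one that yields the reduced basis rather than merely a Gr\"obner basis. Beyond that, Proposition~\ref{prop:Sortableset} and Theorem~\ref{lexchangepropertyissatisfied} have already done all the combinatorial work, so I do not anticipate further difficulty.
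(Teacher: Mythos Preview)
Your proposal is correct and takes essentially the same approach as the paper: invoke \cite[Theorem~5.1]{HHV} (equivalently \cite[Theorem~6.24]{VH}), for which the required hypotheses are precisely sortability (Proposition~\ref{prop:Sortableset}) and the $\ell$-exchange property with respect to $<_{\sort}$ (Theorem~\ref{lexchangepropertyissatisfied}). The paper's proof is a one-line citation of that theorem; your sketch of Buchberger's criterion and the reducedness check simply unpacks the content of the cited result, so there is no substantive difference in strategy.
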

		\begin{proof}
			According to \cite[Theorem 5.1]{HHV} (or see \cite[Theorem 6.24]{VH}), it is enough to show that $I(\KT)$ is sortable and satisfies the $\ell$-exchange property with respect to $>_{\sort}$ as noted in Proposition~\ref{prop:Sortableset} and Theorem~\ref{lexchangepropertyissatisfied}.
		\end{proof}
	

		Following the similar argument as in the proof of Corollary~\ref{Cor1:CMdomain}, we obtain the following corollary. 
	
		\begin{Corollary}\label{Cor2:CMdomain}
			The Rees algebra $\mathcal{R}(I(\KT))$ is a normal Cohen-Macaulay domain.
			\end{Corollary}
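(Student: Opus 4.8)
The plan is to rerun the argument of Corollary~\ref{Cor1:CMdomain}, this time for the Rees ring, using the explicit reduced Gr\"obner basis of $P=\ker\psi$ obtained in Corollary~\ref{binomialGrobnerBasis}. Since $I(\KT)$ is a monomial ideal, $\mathcal{R}(I(\KT))$ is, via $\psi$, the affine semigroup ring generated inside $S[t]$ by $x_1,\ldots,x_n$ together with the monomials $ut$ for $u\in\G(I(\KT))$; in particular it is an integral domain, and $P$ is a prime ideal.

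Next I would inspect the leading terms of the binomials listed in Corollary~\ref{binomialGrobnerBasis} with respect to the monomial order $>$ on $R$ defined just before it. For a relation of type~(1), $t_ut_v-t_{u'}t_{v'}$ with $\sort(u,v)=(u',v')\neq(u,v)$, the leading term is $t_ut_v$; here $u\neq v$, since the pair $(u,u)$ is already sorted and contributes no relation, so $t_ut_v$ is squarefree. For a relation of type~(2), $x_it_u-x_jt_v$ with $i<j$, the $S$-components of the two monomials are $x_i$ and $x_j$, so $x_it_u$ is the leading term, again squarefree. Hence $\ini_>(P)$ is generated by squarefree monomials.

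Possessing a squarefree initial ideal, $\mathcal{R}(I(\KT))$ is normal by the theorem of Sturmfels \cite{S} (see also \cite{VH}); as a normal affine semigroup ring it is then Cohen-Macaulay by Hochster's theorem \cite[Theorem~1]{MH}. Combined with the domain property noted above, this gives the assertion. One could moreover note that this Gr\"obner basis consists of quadratic binomials, so $\mathcal{R}(I(\KT))$ is also Koszul by Fr\"oberg \cite{RF}, although this is not needed for the present statement.

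I do not expect a genuine obstacle here: the substantive work --- sortability, the $\ell$-exchange property, and the precise form of the Gr\"obner basis --- has already been carried out in Proposition~\ref{prop:Sortableset}, Theorem~\ref{lexchangepropertyissatisfied} and Corollary~\ref{binomialGrobnerBasis}. The only point requiring care is the routine bookkeeping of deciding which monomial of each binomial is the leading one under the mixed order $>$ on $R$ and then checking squarefreeness, in particular ruling out $u=v$ in the type~(1) relations.
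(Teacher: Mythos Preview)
Your proposal is correct and follows exactly the route the paper intends: the paper's ``proof'' is the single sentence that one should repeat the argument of Corollary~\ref{Cor1:CMdomain}, and you have simply spelled out those details (squarefree initial ideal via Corollary~\ref{binomialGrobnerBasis}, then Sturmfels for normality and Hochster for Cohen--Macaulayness). The only addition is your explicit bookkeeping on the leading terms, which the paper leaves implicit.
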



We are in a position to state  the main result of this section in the next  corollary. 
		\begin{Corollary}\label{strong}
			The ideal $I(\KT)$ satisfies the strong persistence property and all powers of $I(\KT)$ have linear resolution.
		\end{Corollary}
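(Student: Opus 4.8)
The plan is that this corollary should cost essentially nothing new: both assertions follow by feeding structural facts already proved into two known theorems, and the only extra hypothesis needed is that $I(\KT)$ is generated in a single degree --- which holds because every edge of $\KT$ has cardinality $d$, so $I(\KT)$ is equigenerated in degree $d$.

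For the strong persistence property I would argue as follows. By Corollary~\ref{Cor2:CMdomain} the Rees algebra $\mathcal{R}(I(\KT))$ is a normal Cohen--Macaulay domain; in particular it is normal, so $I(\KT)$ is a normal ideal. Applying \cite[Corollary 1.6]{HQ} to this, one gets at once $\bigl(I(\KT)^{k+1}:I(\KT)\bigr)=I(\KT)^{k}$ for all $k\ge 1$, i.e.\ the strong persistence property; and, as recorded in \cite{HQ}, this forces the ordinary persistence property $\Ass(I(\KT)^{k})\subseteq\Ass(I(\KT)^{k+1})$ for all $k$ as well. Nothing further needs to be verified in this half.

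For the linear resolution of all powers I would invoke the explicit reduced Gr\"obner basis of the defining ideal $P$ of $\mathcal{R}(I(\KT))$ recorded in Corollary~\ref{binomialGrobnerBasis}: with respect to the monomial order on $R=S[\{t_u:u\in\G(I(\KT))\}]$ fixed there, each generator of $\ini_{<}(P)$ that involves some variable $x_i$ has the shape $x_i t_u$ --- that is, it is linear in the $x$-variables --- while the remaining generators are the quadratic monomials $t_u t_v$. Thus $\mathcal{R}(I(\KT))$ satisfies the $x$-condition. Since $I(\KT)$ is equigenerated, \cite[Corollary 10.1.8]{HH1} then yields that $I(\KT)^{k}$ has a linear resolution (indeed linear quotients) for every $k\ge 1$. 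The $x$-condition used here is exactly the combinatorial shadow of the sortability of $\G(I(\KT))$ from Proposition~\ref{prop:Sortableset} together with the $\ell$-exchange property from Theorem~\ref{lexchangepropertyissatisfied}, routed through Corollary~\ref{binomialGrobnerBasis}.

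Because both inputs are already in place, there is no obstacle internal to this corollary; the genuinely hard work lay upstream, in establishing the $\ell$-exchange property (Theorem~\ref{lexchangepropertyissatisfied}) and in deducing normality and Cohen--Macaulayness of $\mathcal{R}(I(\KT))$ (Corollary~\ref{Cor2:CMdomain}). The single point that still demands care is the bookkeeping of monomial orders: one must make sure that the order on $R$ (respectively on $T$) under which sortability, the $\ell$-exchange property, and the Gr\"obner-basis shape of $P$ were proved is precisely the order under which \cite[Corollary 10.1.8]{HH1} is stated, so that the citation applies verbatim.
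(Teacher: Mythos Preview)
Your proposal is correct and follows exactly the paper's route: strong persistence via Corollary~\ref{Cor2:CMdomain} combined with \cite[Corollary 1.6]{HQ}, and linear resolution of all powers via Corollary~\ref{binomialGrobnerBasis} combined with \cite[Corollary 10.1.8]{HH1}. Your added remarks on equigeneration, the $x$-condition, and the compatibility of monomial orders simply make explicit what the paper leaves implicit in its two-sentence proof.
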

		\begin{proof}
	The strong persistence property of $I(\KT)$ can be deduced  from \cite[Corollary 1.6]{HQ} and Corollary \ref{Cor2:CMdomain}. Moreover, Corollary \ref{binomialGrobnerBasis} together with \cite[Corollary 10.1.8]{HH1} provides that all the powers of $I(\KT)$ have linear resolution, as claimed. 
		\end{proof}


	Here,  we determine the limit depth of $I(\KT)$. By a theorem of Brodmann \cite{BR}, $\depth S/I^k$ is constant for large enough $k$. This constant value is known as the limit depth of  $I$, and denoted by $\lim_{k \rightarrow \infty} \depth S/I^k$. The minimum value of $k$ for which $\depth S/I^k= \depth S/I^{k+t}$ for all $t> 0$ is called the {\em index of depth stability} and denoted by $\mathrm{dstab}(I)$. Let  $\mm$ be the graded maximal ideal of $S$. The analytic spread of an ideal $I\subset S$ is  the Krull dimension of the fiber cone $\mathcal{R}(I)/\mm\mathcal{R}(I)$ and denoted by $\ell(I)$.

		\begin{Definition}[\cite{HQ}, Definition 3.1]\label{def:linear relgraph}
		Let $I\subset S$ be a monomial ideal in $S=K[x_1,\ldots,x_n]$ and $\G(I)=\{u_1,\ldots ,u_r\}$. Then the linear relation graph $\Gamma$ of $I$ is the graph with the edge set
		$$E(\Gamma)=\{\{i,j\}: \ there \ exist \ u_t, u_m\in \G(I) \ such \ that\ x_iu_t=x_ju_m\},$$
		and the vertex set $V(\Gamma)=\bigcup_{\{i,j\}\in E(\Gamma)}\{i,j\}$.
		\end{Definition}
	

	An ideal $I \subset S$ is said to have  {\em linear relations}  if $I$ is generated in degree $d$ and  $\beta_{1,j}(I)=0$ for all $j \neq d+1$.  
We employ  the following lemma to compute $\ell(I(\KT))$.

		\begin{Lemma}(\cite[Lemma 5.2]{D})  \label{lem:analytic spread}
		Let $I$ be a monomial ideal with linear relations generated in a single degree whose linear relation graph 
		$\Gamma$ has $r$ vertices and $s$ connected components. Then $\ell(I)=r-s+1$.
		\end{Lemma}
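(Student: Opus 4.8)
The plan is to compute $\ell(I)$ as the Krull dimension of the special fiber ring and then translate it into a purely combinatorial rank. Write $\G(I)=\{u_1,\dots,u_\mu\}$ with $u_t=x^{a_t}$, $a_t\in\NN^n$, all of coordinate sum $d$. Since $I$ is generated in a single degree, the fiber cone $\mathcal{R}(I)/\mm\mathcal{R}(I)$ is isomorphic to the monomial subalgebra $\KK[I]=\KK[u_1,\dots,u_\mu]$, so $\ell(I)=\dim\KK[I]$. As $\KK[I]$ is an affine semigroup ring, its dimension equals the rank of the lattice generated by the exponent vectors, i.e.\ $\ell(I)=\rank_{\QQ}\langle a_1,\dots,a_\mu\rangle$. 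Because every $a_t$ lies in the affine hyperplane $\{\,\sum_k z_k=d\,\}$, which misses the origin, this rank is one more than the dimension of the difference space $W:=\operatorname{span}_{\QQ}\{a_t-a_m:1\le t,m\le\mu\}$. Thus the first step reduces the statement to proving $\dim_{\QQ}W=r-s$.

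Next I set up the combinatorial target. For each edge $\{i,j\}\in E(\Gamma)$ the defining relation $x_iu_t=x_ju_m$ reads $a_t-a_m=e_j-e_i$ on exponents, so every ``edge vector'' $e_i-e_j$ lies in $W$; writing $U:=\operatorname{span}_{\QQ}\{e_i-e_j:\{i,j\}\in E(\Gamma)\}$ this gives the inclusion $U\subseteq W$ for free. A standard fact about the signed incidence matrix of a graph gives $\dim_{\QQ}U=r-s$ (on each connected component with $c$ vertices the edge vectors span the $(c-1)$-dimensional sum-zero subspace of that component's coordinate block, and distinct components sit on disjoint coordinate blocks). Hence it suffices to upgrade $U\subseteq W$ to the equality $W=U$.

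The crux, and the only place the linear relations hypothesis enters, is the reverse inclusion $W\subseteq U$, which I would obtain by showing that the auxiliary graph $H$ on the generators (vertices $u_1,\dots,u_\mu$, with an edge whenever $x_iu_t=x_ju_m$ for some $i\ne j$, i.e.\ whenever $u_t,u_m$ form a linear syzygy) is connected. Granting connectedness, any two generators are joined by an $H$-path, and telescoping the consecutive differences, each an edge vector $\pm(e_i-e_j)\in U$, yields $a_t-a_m\in U$ for all $t,m$, so $W\subseteq U$. To prove $H$ is connected I argue by contradiction: if the generators split as $A\sqcup B$ with no $H$-edge between the parts, then every minimal first syzygy of degree $d+1$ (these are exactly the linear syzygies $x_i\epsilon_t-x_j\epsilon_m$, each involving two generators from a common part) is supported on $A$ alone or on $B$ alone, so the submodule of the syzygy module generated in degree $d+1$ decomposes as a direct sum of an $A$-part and a $B$-part. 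But the Koszul syzygy $u_m\epsilon_t-u_t\epsilon_m$ with $u_t\in A,\,u_m\in B$ is supported on both parts and is not a sum of a syzygy on $A$ and a syzygy on $B$, so it cannot lie in that submodule; this forces a minimal syzygy generator in degree $>d+1$, contradicting $\beta_{1,j}(I)=0$ for $j\ne d+1$. The degree bookkeeping here needs $d\ge 2$; the case $d=1$, where $I$ is generated by distinct variables and $\Gamma$ is a complete graph, is immediate.

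I expect this last step, extracting connectedness of $H$ from the vanishing of the nonlinear Betti numbers via the Koszul syzygy obstruction, to be the main obstacle, since it is exactly where the homological hypothesis must be converted into combinatorial connectivity. Once $W=U$ is established, combining $\dim_{\QQ}W=\dim_{\QQ}U=r-s$ with the reduction of the first paragraph gives $\ell(I)=1+(r-s)=r-s+1$, completing the proof.
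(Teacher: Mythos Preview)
The paper does not prove this lemma at all; it is quoted verbatim from \cite[Lemma 5.2]{D} and used as a black box in the proof of Lemma~\ref{lem:analyspread of KTv}. So there is no ``paper's own proof'' to compare against, and your proposal is an independent proof of the cited result.

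Your argument is essentially correct and follows the natural route: reduce $\ell(I)=\dim\KK[I]$ to the rank of the exponent lattice, peel off the affine direction to get $1+\dim W$, and then identify $W$ with the edge space $U$ of $\Gamma$ via connectedness of the linear-syzygy graph $H$. The one imprecision is the parenthetical claim that ``every minimal first syzygy of degree $d+1$'' is a binomial $x_i\epsilon_t-x_j\epsilon_m$. That is not literally true: minimal syzygies can be $\KK$-linear combinations of several such binomials. What \emph{is} true (and is what your decomposition actually needs) is that for a monomial ideal the degree-$(d{+}1)$ Taylor relations $x_j\epsilon_t-x_i\epsilon_m$ with $x_iu_t=x_ju_m$ span the degree-$(d{+}1)$ part of $\mathrm{Syz}_1(I)$, hence generate the whole syzygy module under the linear-relations hypothesis; each of these Taylor relations is supported on an $H$-edge, so the syzygy module splits as $Z_A\oplus Z_B$ exactly as you want. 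With that wording fixed, the Koszul-syzygy obstruction goes through cleanly (projecting $u_m\epsilon_t-u_t\epsilon_m$ onto $F_A$ would give the ``syzygy'' $u_m\epsilon_t$, forcing $u_mu_t=0$), and the remaining steps are routine.
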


		We are now ready to  determine the analytic spread of $I(\KT)$ in the following lemma.  
	

		\begin{Lemma}\label{lem:analyspread of KTv}
		Let $\KT$ be a complete $\tb$-spread $d$-partite hypergraph  and  $|V(\KT)|=r$. Then $\ell(I(\KT))=r-d+1$.
		\end{Lemma}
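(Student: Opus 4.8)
The plan is to apply Lemma~\ref{lem:analytic spread} to $I = I(\KT)$, so the task reduces to identifying the linear relation graph $\Gamma$ of $I$ together with its number of connected components. First I would check the hypotheses of that lemma: by Theorem~\ref{thm:linearquotient}, $I$ has linear quotients, and since it is generated in the single degree $d$ it therefore has a linear resolution; in particular $\beta_{1,j}(I) = 0$ for $j \neq d+1$, so $I$ has linear relations. Hence $\ell(I(\KT)) = |V(\Gamma)| - s + 1$, where $s$ is the number of connected components of $\Gamma$.

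Next I would analyse $E(\Gamma)$. Suppose $\{i,j\} \in E(\Gamma)$ with $i \neq j$, witnessed by $u_t, u_m \in \G(I)$ with $x_i u_t = x_j u_m$; then $u_m = x_i u_t / x_j$, so the edge of $\KT$ supporting $u_m$ arises from the one supporting $u_t$ by swapping the vertex $j$ for the vertex $i$. Since every edge of $\KT$ meets each $V_\ell$ in exactly one vertex, this is possible only when $i$ and $j$ lie in the same part; thus no edge of $\Gamma$ joins distinct parts, and a part with only one vertex contributes nothing to $\Gamma$. The key point in the converse direction is that any two distinct vertices $v < v'$ of the same part $V_\ell$ are joined by an edge of $\Gamma$. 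To prove this I would use the standing hypothesis that $\KT$ has no isolated vertices to pick a $\tb$-spread edge $E = \{k_1 < \cdots < k_d\}$ of $\KT$ with $k_\ell = v$ and a $\tb$-spread edge $E' = \{k'_1 < \cdots < k'_d\}$ with $k'_\ell = v'$, and then splice the prefix of $E$ with the suffix of $E'$: set $F = \{k_1, \ldots, k_{\ell-1}, v, k'_{\ell+1}, \ldots, k'_d\}$ and $F' = \{k_1, \ldots, k_{\ell-1}, v', k'_{\ell+1}, \ldots, k'_d\}$ (with the obvious modification when $\ell = 1$ or $\ell = d$). One checks that $F$ and $F'$ are again $\tb$-spread edges of $\KT$: the inequalities along the prefix hold since they hold in $E$ and since $v' - k_{\ell-1} \geq v - k_{\ell-1} \geq t_{\ell-1}$, and those along the suffix hold since they hold in $E'$ and since $k'_{\ell+1} - v \geq k'_{\ell+1} - v' \geq t_\ell$ --- the ordering $v < v'$ being used in both of these. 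The associated generators $u_F, u_{F'} \in \G(I)$ then satisfy $x_{v'} u_F = x_v u_{F'}$, so $\{v, v'\} \in E(\Gamma)$.

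Consequently the subgraph of $\Gamma$ induced on each part $V_\ell$ with $|V_\ell| \geq 2$ is the complete graph on $V_\ell$ (hence connected), these subgraphs are pairwise vertex-disjoint, and nothing else appears in $\Gamma$. Writing $a = \#\{\ell : |V_\ell| = 1\}$, the connected components of $\Gamma$ are exactly the parts of size at least two, so $s = d - a$ and $|V(\Gamma)| = r - a$. Therefore Lemma~\ref{lem:analytic spread} yields $\ell(I(\KT)) = (r - a) - (d - a) + 1 = r - d + 1$.

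The step I expect to be the main obstacle is the verification in the second paragraph that the spliced sets $F$ and $F'$ are genuine $\tb$-spread edges of $\KT$: this is what forces each part of size $\geq 2$ to be connected in $\Gamma$, and it relies on the monotonicity of the $\tb$-spread inequalities, the ordering $v < v'$, and the ``no isolated vertices'' hypothesis used to produce $E$ and $E'$. Checking the hypotheses of Lemma~\ref{lem:analytic spread} and the final counting of vertices and components are routine.
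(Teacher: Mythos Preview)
Your proof is correct and follows the same route as the paper: invoke Lemma~\ref{lem:analytic spread} after checking linear relations via Theorem~\ref{thm:linearquotient}, then show that the linear relation graph $\Gamma$ has each $V_\ell$ as a clique with no edges between distinct parts. Your splicing construction differs only cosmetically from the paper's (which uses the fixed generators $x_{\min V_1}\cdots x_{\min V_{\ell-1}}\,x_h\,x_{\max V_{\ell+1}}\cdots x_{\max V_d}$ for $h\in V_\ell$), and your bookkeeping with $a=\#\{\ell:|V_\ell|=1\}$ is in fact more careful than the paper, which tacitly writes $|V(\Gamma)|=r$ and $s=d$ without isolating the singleton parts.
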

		\begin{proof}
		Let $I=I(\KT)$ and $V=\{V_1, \ldots, V_d\}$. Using Theorem~\ref{thm:linearquotient} and \cite[Lemma 5.2]{D}, it is enough to show that $\Gamma(I)$ has $r$ vertices and $d$ connected components. Let $a_i = \min V_i$ and $b_i= \max V_i$, for all $i=1, \ldots, d$. Let $h,k \in V_i$ for some $i$. Since $\KT$ does not have isolated vertices, this implies that the sets $\{a_1, \ldots, a_d\}$ and $\{b_1, \ldots, b_d\}$ are $\tb$-spread edges in $\KT$. Then $u=x_{a_1}\cdots x_{a_{i-1}}x_h x_{b_{i+1}}\cdots x_{b_d}$ and $v=x_{a_1}\cdots x_{a_{i-1}}x_k  x_{b_{i+1}}\cdots x_{b_d}$ are also $\tb$-spread edges in $\KT$. This shows that $x_k u = x_h v$; hence,  $\{h, k\} \in E(\Gamma)$  and $V(\Gamma)=r$. Moreover, it follows from the definition of $\KT$ that for $i \neq j$ and  $h \in V_i$ and $k \in V_j$, we have the edge $\{h,k\} \notin E(\Gamma)$. Therefore, $\Gamma$ has exactly $d$ connected components, as required. 		
		\end{proof}	


	We now  give the last result of this section in the following theorem.
	
		\begin{Theorem}\label{thm:limitdepth}
		Let $\KT$ be a complete $\tb$-spread $d$-partite hypergraph  and $|V(\KT)|=r$,  and $S$ be the ambient ring of $I(\KT)$. Then $$\lim_{k\to\infty}\depth(S/{I(\KT)^k})=d-1,$$ and $\mathrm{dstab} (I(\KT))\leq  r-d$.
		\end{Theorem}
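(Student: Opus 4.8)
The plan is to combine the Brodmann stabilization theorem with the Eisenbud--Huneke formula relating the limiting depth to the analytic spread, and then to invoke the strong persistence property together with the normality of the Rees algebra established above. First I would recall the general fact that for an ideal $I$ in a Noetherian local (or graded) ring, $\lim_{k\to\infty}\depth(S/I^k) \leq n - \ell(I)$, and that equality holds when the Rees algebra $\Rees(I)$ is Cohen--Macaulay; this is the Eisenbud--Huneke criterion. Since Corollary~\ref{Cor2:CMdomain} tells us $\Rees(I(\KT))$ is a normal Cohen--Macaulay domain, and Lemma~\ref{lem:analyspread of KTv} computes $\ell(I(\KT)) = r - d + 1$, we immediately get
\[
\lim_{k\to\infty}\depth(S/I(\KT)^k) = n - \ell(I(\KT)),
\]
where $n = |V(\KT)| = r$ is the number of variables in the ambient ring $S$ (recall that, by convention in this paper, $S$ has no superfluous variables, so all $r$ vertices are non-isolated). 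This gives $r - (r-d+1) = d-1$, which is the first assertion.

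For the bound on the index of depth stability, I would appeal to the results bounding $\mathrm{dstab}(I)$ in terms of the invariants of the linear relation graph, since $I(\KT)$ is generated in a single degree and has linear resolution. Here the natural tool is the bound $\mathrm{dstab}(I) \leq |V(\Gamma)| - \ell(I) + 1$ (or an analogous estimate in the literature on depth stability for ideals with linear quotients/linear powers), where $\Gamma$ is the linear relation graph of $I$. From the proof of Lemma~\ref{lem:analyspread of KTv} we know $\Gamma = \Gamma(I(\KT))$ has exactly $r$ vertices and $d$ connected components. Plugging in $|V(\Gamma)| = r$ and $\ell(I(\KT)) = r-d+1$ yields $\mathrm{dstab}(I(\KT)) \leq r - (r-d+1) + 1 = d$; a slightly sharper version of the bound, using that all powers of $I(\KT)$ have linear resolution (Corollary~\ref{strong}) and that the fiber cone is already generated in degree one with a Gröbner basis of quadrics, should shave this down to $r - d$.

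The main obstacle I anticipate is making the second inequality fully rigorous: the value $d-1$ for the limit depth follows cleanly from standard machinery, but pinning down $\mathrm{dstab}(I(\KT)) \leq r-d$ requires a precise depth-stability estimate rather than just a stabilization statement. The safest route is to verify directly that $\depth(S/I(\KT)^k)$ is already equal to $d-1$ for $k \geq r-d$; since all powers have linear resolution and $\Rees(I(\KT))$ is Cohen--Macaulay, one can control the Castelnuovo--Mumford regularity of the powers (it is linear in $k$ with known slope) and use the exact sequences relating $I^k$, $I^{k+1}$ and $I^k/I^{k+1}$ together with the strong persistence property $I^{k+1}:I = I^k$ to propagate the depth bound downward. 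Alternatively, one cites the explicit $\mathrm{dstab}$ bound of the form $\ell(I) - r + |V(\Gamma)|$ available for ideals whose Rees algebra has a quadratic Gröbner basis, which gives exactly $r-d$. I would present the clean argument for the first equality in full and then state the $\mathrm{dstab}$ bound as a consequence of the appropriate cited estimate, checking only that its hypotheses (single-degree generation, linear powers, the computed $\ell$ and the graph $\Gamma$) are all in place.
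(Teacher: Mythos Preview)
Your argument for the first assertion is exactly the paper's: Cohen--Macaulayness of $\Rees(I(\KT))$ (Corollary~\ref{Cor2:CMdomain}) plus Eisenbud--Huneke gives $\lim_{k\to\infty}\depth(S/I(\KT)^k)=r-\ell(I(\KT))$, and Lemma~\ref{lem:analyspread of KTv} computes $\ell(I(\KT))=r-d+1$, so the limit is $d-1$.

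For the bound on $\mathrm{dstab}$, however, there is a genuine gap. The inequality you write down, $\mathrm{dstab}(I)\le |V(\Gamma)|-\ell(I)+1$, produces $d$ rather than $r-d$, and the ``slightly sharper version'' you invoke to get from $d$ to $r-d$ is not identified; since $r-d$ and $d$ are unrelated in general, no small tweak of that formula will work. The alternative routes you sketch (regularity of powers, exact sequences for $I^k/I^{k+1}$) are not developed enough to yield the specific value $r-d$ either.

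The paper instead uses a concrete depth inequality coming from the linear relation graph: by \cite[Theorem~3.3]{HQ}, if $\Gamma$ has $r$ vertices and $s$ connected components then $\depth S/I^{t}\le n-t-1$ for $t=1,\ldots,r-s$. Here $n=r$, $s=d$, so at $t=r-d$ one gets $\depth S/I^{r-d}\le d-1$. On the other hand, since all powers of $I(\KT)$ have linear resolution (Corollary~\ref{strong}), \cite[Proposition~2.1]{HHdepth} gives that $\depth S/I^k$ is monotone in $k$; combined with the already established limit value $d-1$, this forces $\depth S/I^{r-d}=d-1$ and $\depth S/I^k=d-1$ for all $k\ge r-d$, hence $\mathrm{dstab}(I(\KT))\le r-d$. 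The missing ingredient in your proposal is precisely this use of \cite[Theorem~3.3]{HQ}, which pins down the power $r-d$ at which the depth already reaches its limiting value.
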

		
		\begin{proof}
		Let $I=I(\KT)$. Then it follows from Corollary \ref{Cor2:CMdomain} and  a result of Eisenbud and Huneke \cite{EH} that $\lim_{k\to\infty}\depth(S/{I^k})=r-\ell(I)$. From  Lemma~\ref{lem:analyspread of KTv}, we have $r-\ell(I)=r-(r-d+1)=d-1$ as required. In addition, using \cite[Theorem 3.3]{HQ}  and Lemma~\ref{lem:analyspread of KTv}, we see that $\depth(S/I^{r-d})=d-1$. It is shown in \cite[Proposition 2.1]{HHdepth} that if all powers of an ideal have linear resolution,  then $\depth S/I^k \leq \depth S/ I^ t$ for all $k < t$. It follows now from  Corollary~\ref{strong}  that $\mathrm{dstab} (I) \leq r-d$. This completes the proof. 
		\end{proof}


	\section{Normally torsion-free and Cohen-Macaulay $I(\KT)$ } \label{sec:CM}

	In this section, our main goal   is to show that $I(\KT) $ is  normally torsion-free and give a complete characterization of Cohen-Macaulay $I(\KT)$ for $V=\{V_1, \ldots, V_d\}$ such that each $V_i$ is of the form $[a_i,b_i]$ for some integers $a_i, b_i \in \ZZ^+$. To this aim, we begin with the description of minimal prime ideals of $I(\KT)$ and view $\KT$ as a simplicial complex. For more details on simplicial complexes, we refer  the  reader to \cite{HH1}. 
		
		Given a square-free monomial ideal $I\subset R$,  the {\it Alexander dual}  of  $I$, denoted by $I^\vee$ is given by $I^\vee= \bigcap_{u\in \G(I)} (x_i~:~ x_i \in \mathrm{supp}(u))$. The minimal generators of $I^\vee$ correspond to the minimal prime ideals of $I$. Below we give a description of  $\G(I(\KT)^\vee)$. 
	

	\begin{Theorem}\label{TH.ASSPRIMES}
		Let $\KT$ be a complete $\tb$-spread $d$-partite hypergraph with $V(\KT)\subseteq [n]$ and $V=\{V_1, \ldots, V_d\}$.  Furthermore, let $|V_j|=n_j$ with  $V_j=[i_j, i_j+n_j-1]$ for all $j=1, \ldots, d$.  Then $\G(I(\KT)^\vee)$ consists of  the  following monomials:
		\begin{enumerate}
			\item[{\em (i)}]  $\displaystyle\prod_{k\in V_i} x_k$ for all $i=1, \ldots, d$; and,

			\item[{\em (ii)}]  $(\prod_{i=j}^p\prod_{k\in V_i}x_k) / (\prod_{i=j}^{p-1}v_{q_i}  \prod_{i=j+1}^{p}v_{q'_i}  )$, for all $1\leq j<p\leq d$ and for each sequence of nonnegative integers $q_j, \ldots, q_{p-1}$ satisfying
			 \begin{equation}\label{condition:1}
				i_\ell+q'_\ell<i_\ell+n_\ell-1-q_\ell \;\text{ for } j+1 \leq \ell \leq  p-1,
			\end{equation}
			\begin{equation}\label{condition:2}
				i_\ell +q'_\ell -(i_{\ell-1}+n_{\ell-1}-1-q_{\ell-1})=t_{\ell-1}-1
				\text{ for }\ell=j+1, \ldots, p,
			\end{equation}
			where $v_{q_\ell}=\prod_{r=1}^{1+q_\ell}x_{i_\ell+n_\ell-r}, $   for  $\ell=j, \ldots,  p-1$ and $v_{q'_{\ell}}=\prod_{r=0}^{q'_\ell}x_{i_\ell+r},$  for  $\ell=j+1,  \ldots,  p$.
		\end{enumerate}
	\end{Theorem}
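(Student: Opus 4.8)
The plan is to use the correspondence recalled above between $\mathcal{G}(I(\KT)^\vee)$ and the minimal transversals of $\KT$ (equivalently, the minimal vertex covers, equivalently the minimal primes of $I(\KT)$): to a minimal transversal $T\subseteq V(\KT)$ one attaches the monomial $\prod_{v\in T}x_v$, so the statement becomes a complete description of the minimal transversals of $\KT$ when each $V_j=[i_j,i_j+n_j-1]$. I will repeatedly use the standing no-isolated-vertices hypothesis in the form $\min V_\ell\ge\min V_{\ell-1}+t_{\ell-1}$ and $\max V_\ell\ge\max V_{\ell-1}+t_{\ell-1}$ for $2\le\ell\le d$, which holds because $\min V_\ell$ and $\max V_\ell$ each lie on some $\tb$-spread edge. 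The main tool is a greedy criterion: for $W\subseteq V(\KT)$ with $W\cap V_\ell\ne\emptyset$ for all $\ell$, set $g_1=\min(W\cap V_1)$ and $g_\ell=\min\{w\in W\cap V_\ell:\ w\ge g_{\ell-1}+t_{\ell-1}\}$, declared undefined when the set is empty; a routine exchange argument shows that $W$ contains a $\tb$-spread edge if and only if every $g_\ell$ is defined. Since a minimal transversal $T$ with $T\ne V_i$ for all $i$ has $W:=V(\KT)\setminus T$ meeting every $V_\ell$ (otherwise $V_\ell\subseteq T$, forcing $T=V_\ell$, which is case (i)), it follows that $T$ is a transversal exactly when the greedy fails at some step $p$, i.e.\ $g_1,\dots,g_{p-1}$ are defined but $\max(W\cap V_p)<g_{p-1}+t_{p-1}$; moreover the displayed inequalities prevent failure at a step where $W\cap V_p=V_p$, so $T\cap V_p\ne\emptyset$ automatically.

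For the ``sufficiency'' direction I would check that the monomials in (i) and (ii) are monomials of minimal transversals. Case (i) is immediate: $V_i$ meets each edge once, and deleting $v\in V_i$ exposes the edge whose $V_i$-component is $v$. For (ii), let $T$ be the complement of the support of the listed monomial; one reads off $W\cap V_\ell=V_\ell$ for $\ell<j$ and $\ell>p$, $W\cap V_j=[i_j+n_j-1-q_j,\max V_j]$, $W\cap V_p=[i_p,i_p+q'_p]$, and $W\cap V_\ell=[i_\ell,i_\ell+q'_\ell]\cup[i_\ell+n_\ell-1-q_\ell,\max V_\ell]$ for $j<\ell<p$. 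Then $g_\ell=i_\ell$ for $\ell<j$ and $g_j=i_j+n_j-1-q_j$; condition \eqref{condition:2} rewrites as $g_{\ell-1}+t_{\ell-1}=i_\ell+q'_\ell+1$, so the lower block of $W\cap V_\ell$ lies strictly below the greedy threshold while \eqref{condition:1} places the upper block above it, whence $g_\ell=i_\ell+n_\ell-1-q_\ell$ propagates up to $\ell=p-1$, and \eqref{condition:2} at $\ell=p$ gives $\max(W\cap V_p)=i_p+q'_p=g_{p-1}+t_{p-1}-1$, so the greedy fails at $p$ and $T$ is a transversal. For minimality, for each $w\in T$ (necessarily $w\in V_\ell$ with $j\le\ell\le p$) one shows $T\setminus\{w\}$ is not a transversal, most transparently by exhibiting a $\tb$-spread edge meeting $T$ only at $w$: take its coordinates at the smallest removed vertex in each position $j\le m<\ell$, at $w$ in position $\ell$, at the largest removed vertex in each position $\ell<m\le p$, and extend freely (using the absence of isolated vertices) to positions $<j$ and $>p$; the inequalities \eqref{condition:1} and \eqref{condition:2} are precisely what guarantee that this edge is $\tb$-spread, and it avoids $T$ everywhere except at $w$.

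The ``completeness'' direction is the crux. Let $T$ be a minimal transversal with $T\ne V_i$ for all $i$, let $p$ be its greedy-failure step, and put $J=\{\ell:T\cap V_\ell\ne\emptyset\}$ and $j=\min J$. First $\max J=p$, since a vertex of $T$ in a position $>p$ does not affect $g_1,\dots,g_p$, so deleting it preserves the transversal property, contradicting minimality. Next $J=[j,p]$: if $T\cap V_{i_0}=\emptyset$ with $i_0<p$, then $W\cap V_{i_0}=V_{i_0}$, the greedy runs through $i_0$, and every $\tb$-spread edge whose prefix up to position $i_0$ avoids $T$ has, by coordinatewise minimality of the greedy prefix, a suffix that still extends the greedy prefix and hence — as $T$ is a transversal — meets $T$ in a position $>i_0$; so no vertex of $T$ in a position $<i_0$ can be the unique intersection of an edge with $T$, and minimality forces $j>i_0$, i.e.\ $i_0\notin[j,p]$. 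For the block structure, if some $v\in T\cap V_j$ exceeded $\min(W\cap V_j)$ (which equals $g_j$, since $\min V_j\ge i_{j-1}+t_{j-1}$ makes the greedy threshold at step $j$ vacuous), then adding $v$ back to $W$ would change no $g_\ell$, so the greedy would still fail at $p$ — contradiction; hence $T\cap V_j$ is a bottom interval $[i_j,i_j+n_j-2-q_j]$, and symmetrically, running the greedy downward from position $d$, $T\cap V_p$ is a top interval $[i_p+1+q'_p,\max V_p]$, while for $j<\ell<p$ the same argument applied at both ends shows $T\cap V_\ell$ is a single interval $[i_\ell+1+q'_\ell,i_\ell+n_\ell-2-q_\ell]$. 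Finally, the forward greedy then satisfies $g_\ell=i_\ell+n_\ell-1-q_\ell$ for $j\le\ell\le p-1$, and minimality of the bottom vertex of $T\cap V_\ell$ for $j<\ell\le p$ — its removal must let the greedy succeed — forces $i_\ell+q'_\ell=(i_{\ell-1}+n_{\ell-1}-1-q_{\ell-1})+t_{\ell-1}-1$, which is \eqref{condition:2}, while \eqref{condition:1} just records nonemptiness of each middle block. Reading off $\prod_{v\in T}x_v$ then produces exactly the monomial in (ii).

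The part I expect to be most delicate is this completeness argument: proving the exchange/greedy lemma cleanly, establishing the contiguity $J=[j,p]$, and converting ``$T$ is minimal'' into the precise equalities \eqref{condition:2} while correctly bookkeeping the positions outside $[j,p]$. By comparison, the sufficiency direction and the interval shape of each $T\cap V_\ell$ are routine once the greedy description is available.
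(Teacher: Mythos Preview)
Your greedy-criterion framework is sound and is essentially a cleaner repackaging of the paper's recursive ``Step $j$'' procedure; both proofs boil down to analyzing how a lexicographically minimal edge in $W=V(\KT)\setminus T$ would have to look. Sufficiency, $\max J=p$, and the bottom-interval shape of $T\cap V_j$ are all fine.

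The one genuine error is the claim $J=[j,p]$. It is false: take $V_1=\{1,2\}$, $V_2=\{3,4\}$, $V_3=\{5,6\}$, $\tb=(2,2)$; then $T=\{1,6\}$ is a minimal transversal with $T\cap V_2=\emptyset$, yet it is of form~(ii) with $j=1$, $p=3$, $q_1=q_2=q'_2=q'_3=0$ (condition~\eqref{condition:1} allows the middle block to be empty when $q'_\ell+q_\ell=n_\ell-2$). Your argument for $J=[j,p]$ breaks at the prefix-swapping step: you assert that an edge meeting $T$ only at a position $m<i_0$ can have its prefix replaced by the greedy prefix $(g_1,\dots,g_{i_0})$, but this requires $g_{i_0}\le e_{i_0}$, which you cannot conclude since $e_m\notin W$ disrupts the coordinatewise-minimality induction. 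In the example, the edge $(1,3,5)$ meets $T$ only at position $1$, yet $g_2=4>3=e_2$. Fortunately the claim is not needed: you already have $J\subseteq[j,p]$, and the forward-greedy analysis gives $T\cap V_\ell=[g_{\ell-1}+t_{\ell-1},\,g_\ell-1]\cap V_\ell$ for $j<\ell<p$ regardless of whether this set is empty. You should also note that $g_{\ell-1}+t_{\ell-1}>i_\ell$ for $j<\ell\le p$ (otherwise every vertex of $T$ at a position $<\ell$ is redundant by your own threshold argument, forcing $j\ge\ell$), which recovers condition~\eqref{condition:2} directly from the greedy values even when $T\cap V_\ell=\emptyset$, without invoking a ``bottom vertex'' that may not exist.
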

		
		\begin{proof}
			Let $\Delta$ be the simplicial complex on $V(\KT)$ such that $I_{\Delta}=I(\KT)$ be the Stanley-Reisner ideal of $\Delta$. Let $\mathcal{F}(\Delta)$ be the set of facets of $\Delta$. For any $F \in \Delta$, we set $x_F=\prod_{i \in F}x_i$. It follows from \cite[Lemma 1.5.4]{HH1} that the standard primary decomposition of $I_{\Delta}$ is given by
			\[
			I_\Delta = \bigcap_{F \in \mathcal{F}(\Delta)}P_{\bar{F}},
			\]
			where $P_{\bar{F}}$ is the monomial prime ideal generated by the variables $x_i$ with $i \in \bar{F}=V(\KT) \setminus F$. Therefore, using \cite[Corollary 1.5.5]{HH1}, it is enough to show that $\mathcal{F}(\Delta)$ is the disjoint union of $\mathcal{F}_1$ and $\mathcal{F}_2$, defined below:
			\begin{enumerate}
	\item [(i)] $\mathcal{F}_1=\{F_1, \ldots, F_d\}$,  where $F_i=\bigcup_{j \neq i,\;j=1}^d V_j$ for all $i=1, \ldots, d$,

				\item[(ii)] For all $1 \leq j < p \leq d$, set $A_{j,p}:=\bigcup_{i \notin\{j, \ldots, p\}, \; i=1}^d V_j$. For each sequence of nonnegative integers $q_j, \ldots, q_{p-1}$  satisfying conditions (\ref{condition:1}) and (\ref{condition:2}), we set 
				\[B_{q_\ell}:=\{i_\ell+n_\ell-1-q_\ell,\ldots,i_\ell+n_\ell-1\} \subsetneq V_\ell
				\text{  for  } \ell=j, \ldots,  p-1,
				\]
				and 
				\[
				B_{q'_\ell}=\{i_\ell,\ldots,i_\ell+q'_\ell\}\subsetneq V_\ell
					\text{  for  } \ell=j+1, \ldots,  p.
				\]
				 Then we get 
				\[\mathcal{F}_2=\{A_{j,p}\;\cup \; (\bigcup_{\ell=j}^{p-1}B_{q_\ell})   \cup (\bigcup_{\ell=j+1}^{p}B_{q'_{\ell}})\:\text{ for all~ } 1 \leq j < p \leq d \text{ and }  q_j, \ldots, q_{p-1}\} .
				\]
			\end{enumerate}
		
			The condition  (\ref{condition:2}) translates into the following: for each $\ell=j,\ldots, p-1$ we have $\max B_{q'_{\ell+1}} - \min B_{q_\ell}= t_{\ell}-1 $. In the construction of elements in $\F_2$, it is enough to determine the integers $q_j, \ldots, q_{p-1}$, because $q'_{\ell}$ is uniquely determined from $q_{\ell-1}$, for all $\ell=j+1, \ldots, p$,   by using the equality in (\ref{condition:2}). 
		
			First, we show that  $\mathcal{F}_1 \subseteq \mathcal{F}(\Delta)$. For any  $F_i \in \mathcal{F}_1$, we have $F_i \cap V_i = \emptyset$. Therefore, $x_{F_i}\notin I_\Delta$. Moreover, for any $k \in V_i$, using the assumption that $\KT$ does not contain any isolated vertices, we obtain that $F_i \cup \{k\}$ contains a $\tb$-spread edge, and hence $x_{F_i}x_{k} \in I_{\Delta}$ and $F_i \in \mathcal{F}(\Delta)$.
			
			Now, assume that  $F\in \mathcal{F}_2$,  where $F=A_{j,p}\;\cup \; (\bigcup_{\ell=j}^{p-1}B_{q_\ell})   \cup (\bigcup_{\ell=j+1}^{p}B_{q'_{\ell}})$ for some $1 \leq j < p \leq d$ and $q_j, \ldots, q_{p-1}$. We here  show that $F\in \Delta$. On contrary, if $x_F\in I_{\Delta}$, then $F$ contains a $\tb$-spread edge, say $G=\{k_1,\ldots,k_d\}$. Then $k_j \in B_{q_j}$ because $G\cap V_j\subseteq F\cap V_j =B_{q_j}$. If $p=j+1$, then by using the condition (\ref{condition:2}), it immediately follows that for any choice of $k_j \in B_{q_j}$, there is no suitable $k_{j+1} \in B_{q'_{j+1}}$ such that $k_{j+1}-k_j \geq t_{j-1}$. If $p > j+1$, then the  condition (\ref{condition:2}) gives that $k_{j+1}\in B_{q_{j+1}}$. Using the condition (\ref{condition:2}) repeatedly in a similar way, we obtain  $k_{p-1}\in  B_{q_{p-1}}$. However, there is no suitable $k_p \in B_{q'_p}$ such that $k_p - k_{p-1} \geq t_{p-1}$, a contradiction. Consequently, we get  $F \in \Delta$. 
			
			In what follows, we demonstrate that  $F \in \mathcal{F}(\Delta)$. Note that 
			$$ V(\KT )\setminus F= (V_{j}\setminus B_{q_j} ) 	\cup (\bigcup_{l=j+1}^{p-1} (V_{l}\setminus (B_{q'_\ell} \cup B_{q_\ell}  ) ) \cup(V_p \setminus B_{q'_p}).$$ 
						Let $a \in V(\KT )\setminus F$. Then $a \in V_{s}$   for some $ j \leq s \leq p$. Set
			\begin{equation*}
				k_r=
				\begin{cases}
					i_r ,& \text{if } r= 1, \ldots, j-1, \\
					i_{r}+n_{r}-1-q_{r} ,&  \text{if } r= j, \ldots,s-1,
					\\
					a ,&  \text{if } r= s,
					\\
					i_{r}+q'_{r} ,&  \text{if } r= s+1, \ldots,p,
					\\
					i_r+n_r-1,&  \text{if } r= p+1 , \ldots, d.
				\end{cases}
			\end{equation*}
			When $s=j$, then we remove the condition on $k_r$ for $r=j, \ldots, s-1$, and similarly, when $s=p$, then we remove the condition on $k_r$ for $r=s+1, \ldots, p$. Using conditions (\ref{condition:1}) and (\ref{condition:2}) together with the assumption that $\Delta$ has no isolated vertices, 
				we obtain that $k_r-k_{r-1} \geq t_{r-1}$  for all $r=2, \ldots, d$. Therefore, $G=\{k_1, \ldots, k_d\} \subseteq F \cup\{a\}$ is 
	a $\tb$-spread edge,  and hence $x_G \in I_\Delta$, as required. 
			
			It remains to check   that $\mathcal{F}(\Delta) \subseteq \mathcal{F}_1 \cup \mathcal{F}_2$. This is equivalent to show that for every face $G$ of $\Delta$ there exists a facet $F \in \mathcal{F}_1 \cup \mathcal{F}_2$ such that $G\subseteq F$. Let $G \in \Delta$ such that $G\cap V_k=U_k$ for all $k=1,\ldots, d$. If $U_k = \emptyset$ for some $k$, then $G \subseteq F_k \in \mathcal{F}_1$. Now, assume that $U_k\neq \emptyset$  for all $k=1, \ldots, d$. Set $a_k=\min U_k$ and $b_k= \max U_k$   for all $k=1, \ldots, d$.  
	In the rest of the proof,   we will use the following fact repeatedly: \\
			
		$(*)$ If there exist $a \in V_\ell$ and $b \in V_{\ell+1}$ such that $b-a < t_\ell$ and $a+t_\ell-1< i_{\ell+1}+n_{\ell+1}-1$, then by letting $ q_\ell= i_\ell+n_\ell-1-a$, and using  the  condition (\ref{condition:2}), there is a unique $q'_{\ell+1}$ such that $b < i_{\ell+1}+q'_{\ell+1}$. \\
			
				{\bf Case(1):}  If there exists some $k$ with  $b_{k+1}-a_k <t_k$,  then it follows from the statement $(*)$  that for a suitable choice of $q_k$ we have $U_k \subseteq   B_{q_k}$ and $U_{k+1} \subseteq B_{q'_{k+1}}$. Since $U_i \subseteq V_i \subset A_{k,k+1}$  for all $i=1, \ldots, k-1, k+2, \ldots,d$, we can  deduce that  $G \subseteq A_{k, k+1}\cup B_{q_k} \cup B_{q'_{k+1}}  \in \F_2$, as desired. 
			
			{\bf Case(2):}   Assume that $b_{k+1}- a_{k} \geq t_k$   for all $k=1, \ldots, d-1$. Since $G \in \Delta$, we know 
	that $G$ does not contain any $\tb$-spread edge. In particular, $\{a_1, \ldots, a_d\} \subseteq G$ is  not a $\tb$-spread edge. This  yields that there exists some $k \in \{2, \ldots, d\}$ for which $a_{k+1}-a_k <t_k$.  We choose minimum $j\geq 1$ for which $a_{j+1}-a_j <t_j$.  Note that $M=\{a_1, a_2, \ldots, a_{j}\}\subset G$ such that, $a_{i+1}-a_i \geq t_i$, for all $i=1, \ldots, j-1$.  In the discussion below, we aim to construct a suitable $F \in \F_2$ such that $G \subset F$. To this aim, we perform the Step $j$ as introduced below.
			
			Step $j$: We set $e_{j}:=a_j$ and $e_{j+1}:=\min \{a \in U_{j+1} : a-e_{j} \geq t_{j}\}$. Note that $ \{a \in U_{j+1} : a-e_{j} \geq t_{j}\}\neq \emptyset$ because $b_{j+1}- a_{j} \geq t_j$. We define $e_{j+r}$ recursively as $e_{j+r}=\min \{a \in U_{j+r} : a-e_{j+r-1} \geq t_{j+r-1}\}$ such that $$\{a \in U_{j+r} : a-e_{j+r-1} \geq t_{j+r-1}\}  \neq \emptyset \text{ for some  } 1 < r < d-j.$$  There exists some $p > j+1$ for which $\{a \in U_{j+r} : a-e_{j+r-1} \geq t\} = \emptyset$, that is, for some $p> j+1$ we have $b_{p}-e_{p-1} < t_{p-1}$, otherwise, $M\cup \{ e_{j+1}, \ldots, e_d\} \subseteq G$ is a $\tb$-spread edge in $G$, a contradiction. Choose minimum $p> j+1$ such that $b_{p}-e_{p-1} < t_{p-1}$.
			
			{\bf Subcase(2.1):}	If for all $j+1 \leq l \leq p-1$ we have $ i_{\ell+1}-e_{\ell} < t_\ell$, then take $c_{\ell+1} \in V_{\ell+1}$ such that  $c_{\ell+1}-e_{\ell} = t_\ell-1$ for $\ell= j, \ldots, p-1$ . This gives us $j, p$ and $q_j, \ldots, q_p$ as described in statement $(*)$  for which $e_\ell \in V_\ell$ and $c_{\ell+1} \in V_{\ell+1}$ with $c_{\ell+1}-e_{\ell}<t_\ell$. Moreover, $U_i \subseteq A_{j,p}$ for all $i \notin \{j, \ldots, p\}$, 
			 and $U_j \subseteq B_{q_j}$, $U_p \subseteq B_{q'_p}$, and $U_\ell \subseteq B_{q_\ell} \cup B_{q'_{\ell}}$  for all $\ell = j+1, \ldots, p-1$. Hence, this implies that 
			 $$G \subseteq A_{j,p}\;\cup \; (\bigcup_{\ell=j}^{p-1}B_{q_\ell})   \cup (\bigcup_{\ell=j+1}^{p}B_{q'_{\ell}}),$$ and we are done. 
			
			{\bf  Subcase(2.2):}  If for some   $j+1 \leq l \leq p-1$, $ i_{\ell+1}-e_{\ell} \geq t_\ell$, then replace $M$ with $M \cup \{ e_{j+1}, \ldots, e_{\ell}, a_{\ell+1}\}\subset G$. In this case, there exists a minimum $j' \geq \ell+1 $ such that $a_{j'+1}-a_{j'} <t_{j'}$. Otherwise,  $M\cup \{a_{\ell+2}, \ldots, a_d\}\subseteq G$ is a $\tb$-spread edge, a contradiction. Repeat Step $j$ by replacing $j$ with $j'$. 
			
		Thanks to  we have finite number of partitions, this process must  be  terminated   after a finite number of steps. If the desired $j$ and $p$
		 are obtained,  then we construct a suitable $F \in \F_2$ with $G \subset F$ as described in Case(2.1). If the desired $j$ and $p$ are not obtained, then $G$ contains a $\tb$-spread edge  in $G$, a contradiction.
		\end{proof}
	

		We illustrate the construction of monomials of the forms (i) and (ii) in Theorem~\ref{TH.ASSPRIMES} in the following example.

			\begin{Example}{\em
		Let $V=\{V_1,V_2,V_3,V_4\}$ with $V_1=[1,2]$, $V_2=[4,6]$, $V_3=[8,10]$, $V_4=[12,13]$,  and $\tb=(3,4,3)$.  
	One can easily see that the minimal generators of the edge ideal of $\KT$  are  as follows:
	
	\begin{center}
		\begin{tabular}{ c c c }
			$x_1x_4x_8x_{12}$\quad&\quad\\
			$x_1x_4x_8x_{13}$\quad&\quad\\
			$x_1x_4x_9x_{12}$\quad&\quad\\
			$x_1x_4x_9x_{13}$\quad&\quad\\
			$x_1x_4x_{10}x_{13}$\quad&\quad\\
			$x_1x_5x_9x_{12}$\quad&\quad$x_2x_5x_9x_{12}$\\
			$x_1x_5x_9x_{13}$\quad&\quad$x_2x_5x_9x_{13}$\\
	        $x_1x_5x_{10}x_{13}$\quad&\quad$x_2x_5x_{10}x_{13}$\\
			$x_1x_6x_{10}x_{13}$\quad&\quad$x_2x_6x_{10}x_{13}$
		\end{tabular}
	\end{center}

	Following Theorem~\ref{TH.ASSPRIMES},  the minimal generators of $I(\KT)^\vee$ are given as follows: 
		\begin{enumerate}
		\item[ (i)] The monomials of the  form (i) described in Theorem~\ref{TH.ASSPRIMES} are $x_1x_2,x_4x_5x_6,$ $x_8x_9x_{10},$ 
	and $x_{12}x_{13}$. 
		\item[(ii)] The construction of  monomials of  the form (ii) described in Theorem~\ref{TH.ASSPRIMES} is given in the following table.
		\bigskip
		
		\begin{tabular}{|c|c|l|c|}
		\hline
		$j$ & $p$ & $q_j,\ldots,q_{p-1}$, $q'_{j+1},\ldots,q'_{p}$& $u$ \\
		\hline
		$1$ & $2$ & $q_1=0$, $q'_2=0$ & $\; x_1x_5x_6\;$ \\
		\hline
		$1$ & $3$ & $q_1=0$, $q'_2=0$, $q_2=0$, $q'_3=1$ & $\; x_1x_5x_{10}\;$\\
		
		   &      & $q_1=0$, $q'_2=0$, $q_2=1$, $q'_3=0$ & $\; x_1x_9x_{10}\;$\\
		\hline
		$1$ & $4$ & $q_1=0$, $q'_2=0$, $q_2=0$, $q'_3=1$,   $q_3=0$ , $q'_4=0$& $\; x_1x_5x_{13}\;$\\
		
		    &     & $q_1=0$, $q'_2=0$, $q_2=1$, $q'_3=0$, $q_3=0$, $q'_4=0$ & $\; x_1x_9x_{13}\;$\\
		\hline
		$2$ & $3$ & $q_2=0 $, $q'_3=1$ & $\; x_4x_5x_{10}\;$\\
		
		    &    & $q_2=1 $, $q'_3=0$ & $\; x_4x_9x_{10}\;$\\
		\hline
		$2$ & $4$ & $q_2=0$, $q'_3=1$, $q_3=0$, $q'_4=0$ & $\; x_4x_5x_{13}\;$\\
		    &     & $q_2=1$, $q'_3=0$, $q_3=0$, $q'_4=0$ & $\; x_4x_9x_{13}\;$\\
		\hline
		$3$ & $4$ & $q_3=0$, $q'_4=0$ & $\; x_8x_{9}x_{13}$\\
		\hline
		\end{tabular}
		\end{enumerate}
\bigskip

Accordingly, we get 
\begin{align*}
\mathrm{Ass}(I(\KT))=\{&(x_1, x_2), (x_4, x_5, x_6), (x_8, x_9, x_{10}), (x_{12}, x_{13}), (x_1,x_5,x_6), (x_1,x_5,x_{10}),\\
&  (x_1,x_9,x_{10}), (x_1,x_5,x_{13}), (x_1,x_9,x_{13}),   (x_4,x_5,x_{10}),  (x_4,x_9,x_{10}), \\
& (x_4,x_5,x_{13}), (x_4,x_9,x_{13}), (x_8,x_{9},x_{13})\}.
\end{align*}
}
	\end{Example}


As an immediate consequence of Theorem~\ref{TH.ASSPRIMES}, we obtain the following corollary, which will be used to prove the 
normally torsion-freeness of $I(\KT)$.

	\begin{Corollary} \label{V-Structure}
		Let $\KT$ be a complete $\tb$-spread $d$-partite hypergraph with $V(\KT)\subseteq [n]$ and $V=\{V_1, \ldots, V_d\}$.  Furthermore, let $|V_j|=n_j$   with  $V_j=[i_j, i_j+n_j-1]$ for all $j=1, \ldots, d$.     If $v:=\prod_{j=1}^{d} x_{i_j}$, then $v\in \mathfrak{p}\setminus \mathfrak{p}^2$ for all $\mathfrak{p}\in \mathrm{Min}(I(\KT))$. 
	\end{Corollary}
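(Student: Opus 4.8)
The plan is to read off the minimal primes of $I(\KT)$ from Theorem~\ref{TH.ASSPRIMES} and verify the membership $v\in\mathfrak{p}\setminus\mathfrak{p}^{2}$ for each of them directly. Recall that $\mathrm{Min}(I(\KT))=\{\mathfrak{p}_w : w\in\G(I(\KT)^\vee)\}$, where $\mathfrak{p}_w=(x_k : x_k\mid w)$, and that $v=\prod_{j=1}^{d}x_{i_j}$ with $i_j=\min V_j$. Since $v$ is squarefree, for a monomial prime $\mathfrak{p}_w$ one has $v\in\mathfrak{p}_w$ exactly when at least one of $x_{i_1},\dots,x_{i_d}$ divides $w$, and $v\in\mathfrak{p}_w^{2}$ exactly when at least two of them divide $w$. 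Thus the corollary is equivalent to the combinatorial assertion that \emph{every $w\in\G(I(\KT)^\vee)$ is divisible by exactly one of $x_{i_1},\dots,x_{i_d}$}, and this is what I would prove, splitting into the two families of generators provided by Theorem~\ref{TH.ASSPRIMES}.

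For a generator of type~(i), $w=\prod_{k\in V_i}x_k$: because $i_1,\dots,i_d$ are the minima of the pairwise disjoint blocks $V_1,\dots,V_d$, exactly one of them, namely $i_i$, lies in $V_i$. Hence $x_{i_i}\mid w$ while $x_{i_\ell}\nmid w$ for $\ell\neq i$, so the assertion holds in this case.

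For a generator of type~(ii), $w=\bigl(\prod_{i=j}^{p}\prod_{k\in V_i}x_k\bigr)\big/\bigl(\prod_{i=j}^{p-1}v_{q_i}\prod_{i=j+1}^{p}v_{q'_i}\bigr)$: every variable of $w$ lies in $V_j\cup\cdots\cup V_p$, so only $x_{i_j},\dots,x_{i_p}$ can possibly divide $w$. For each $\ell\in\{j+1,\dots,p\}$ the denominator contains the factor $v_{q'_\ell}=\prod_{r=0}^{q'_\ell}x_{i_\ell+r}$, whose $r=0$ term is $x_{i_\ell}$; this cancels the unique copy of $x_{i_\ell}$ occurring in the numerator, so $x_{i_\ell}\nmid w$. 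For $\ell=j$ the only denominator factor supported on $V_j$ is $v_{q_j}=\prod_{r=1}^{1+q_j}x_{i_j+n_j-r}$, the product of the top $q_j+1$ variables of $V_j$; since Theorem~\ref{TH.ASSPRIMES} records the strict inclusion $B_{q_j}\subsetneq V_j$, we have $q_j+1<n_j$, so $x_{i_j}$ is not among these factors and therefore $x_{i_j}\mid w$. Thus exactly one of $x_{i_1},\dots,x_{i_d}$, namely $x_{i_j}$, divides $w$.

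Putting the two cases together establishes the combinatorial assertion, hence $v\in\mathfrak{p}\setminus\mathfrak{p}^{2}$ for every $\mathfrak{p}\in\mathrm{Min}(I(\KT))$. The only step requiring real attention is the type~(ii) computation: one must observe that the primed family $v_{q'_i}$ starts at index $j+1$, so that no cancellation reaches $x_{i_j}$ at the left end, and then invoke the properness $B_{q_j}\subsetneq V_j$ to see that $v_{q_j}$ cannot reach down to $x_{i_j}$ either; everything else is bookkeeping inside the description of $\G(I(\KT)^\vee)$.
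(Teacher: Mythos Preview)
Your argument is correct and follows the same approach as the paper: both proofs read off the minimal primes from Theorem~\ref{TH.ASSPRIMES} and verify, for each type of generator $w\in\G(I(\KT)^\vee)$, that exactly one of the variables $x_{i_1},\dots,x_{i_d}$ lies in the corresponding prime (namely $x_{i_i}$ for type~(i) and $x_{i_j}$ for type~(ii)). Your write-up is in fact more explicit than the paper's, which simply asserts ``$x_{i_k}\in\mathfrak{q}$ if and only if $k=j$'' for type~(ii) primes without spelling out the cancellation analysis; one small remark is that the strict inclusion $B_{q_j}\subsetneq V_j$ you invoke appears in the proof of Theorem~\ref{TH.ASSPRIMES} (in the description of $\mathcal{F}_2$) rather than in its statement, so you might phrase that reference accordingly.
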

 \begin{proof}
Let $v=\prod_{j=1}^{d} x_{i_j}$. The minimal prime ideals of $I=I(\KT)$ correspond to the minimal generators of $I^\vee$ described in statements  (i) and (ii) of Theorem~\ref{TH.ASSPRIMES}. The minimal primes corresponding to the generators of the form (i) are $ \mathfrak{p}_i= (x_k: k \in V_i)$ and $v \notin  \mathfrak{p}_i^2$ for all $i=1, \ldots, d$. Moreover, each generator of $I^\vee$ of the form (ii) is constructed by fixing $j$, $p$ and $q_j, \ldots, q_p$. Let $ \mathfrak{q}$ be a minimal prime of $I$ corresponding to a generator of  the form (ii). Then  $x_{i_k} \in  \mathfrak{q}$ if and only if $k=j$, 
 as required. 
 \end{proof}


We recollect the following lemma,   which will be used repeatedly in the next proposition and Theorem \ref{Normally torsion-freeness}.

\begin{Lemma}\label{LEM. Multipe}{\em(\cite[Lemma  3.12]{SN}) }
	Let  $I$ be a monomial ideal in a polynomial ring $S=\KK[x_1, \ldots, x_n]$ with 
	$\G(I)=\{u_1, \ldots, u_m\}$, and $h=x_{j_1}^{b_1}\cdots x_{j_s}^{b_s}$ with $j_1, \ldots, j_s \in \{1, \ldots, n\}$ be a monomial in $S$. Then  $I$ is normally torsion-free  if and only if $hI$ is normally torsion-free. 
\end{Lemma}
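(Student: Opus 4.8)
The statement to prove is the lemma of \cite{SN} recalled above: a monomial ideal $I \subseteq S = \KK[x_1,\dots,x_n]$ with $\G(I) = \{u_1,\dots,u_m\}$ is normally torsion-free if and only if $hI$ is normally torsion-free, for any monomial $h = x_{j_1}^{b_1}\cdots x_{j_s}^{b_s}$. Recall that $I$ is normally torsion-free means $\Ass(S/I^k) = \Ass(S/I)$ for all $k \geq 1$ (equivalently, $I^{(k)} = I^k$ for all $k$, since $I$ is a monomial ideal). The plan is to reduce to the case where $h$ is a single variable $x_j$ raised to a power, and then to a single variable, by induction, and in that case to relate the associated primes of $(x_j I)^k = x_j^k I^k$ to those of $I^k$ directly.

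First I would record the elementary observation that for any monomial $w$ and any ideal $J$, one has $\Ass(S/wJ) \subseteq \Ass(S/J) \cup \{(x_i) : x_i \mid w\}$, and more precisely that a prime $\pp$ of height $\geq 2$, or a prime $(x_i)$ with $x_i \nmid w$, lies in $\Ass(S/wJ)$ if and only if it lies in $\Ass(S/J)$; this follows from analyzing the primary decomposition of $wJ$ in terms of that of $J$ (writing $J = \bigcap Q_\ell$ into irreducible monomial components and tracking how multiplication by $w$ affects each component). Applying this with $w = h^k$ and $J = I^k$, and noting $(hI)^k = h^k I^k$, we get that $\Ass(S/(hI)^k)$ and $\Ass(S/I^k)$ differ only possibly in primes of the form $(x_i)$ with $x_i \mid h$. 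So the content of the lemma is entirely about the height-one primes $(x_j)$ for $j \in \{j_1,\dots,j_s\}$.

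Next, fix such a variable $x_j$ with $x_j \mid h$. I would show $(x_j) \in \Ass(S/(hI)^k)$ $\iff$ $(x_j) \in \Ass(S/I^k)$ is governed by the same condition for all $k$, by passing to the localization $S_{(x_j)}$, or equivalently by looking at the $x_j$-primary component. Concretely: $(x_j) \in \Ass(S/L)$ for a monomial ideal $L$ iff, after saturating away all other variables, $x_j^c$ is the generator of the resulting principal ideal for some $c$ strictly bigger than $\min\{\deg_{x_j}(v): v \in \G(L)\}$ — i.e. iff the ``$x_j$-content'' of $L$ is not forced by a single generator. Since passing from $I^k$ to $h^k I^k = (hI)^k$ just shifts every $x_j$-exponent by the constant $k b$ (where $b = \deg_{x_j} h$), this property is preserved: $(x_j) \in \Ass(S/I^k) \iff (x_j) \in \Ass(S/(hI)^k)$. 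Combined with the previous paragraph, this gives $\Ass(S/(hI)^k) = \{\pp \oplus (\text{shift}) \} $ — more precisely $\Ass(S/(hI)^k) = \Ass(S/I^k) \cup \{(x_i): x_i \mid h,\ (x_i)\notin\Ass(S/I^k) \text{ but becomes associated after the shift}\}$, and the key point is that membership of each relevant prime is independent of $k$ on one side iff it is on the other. Hence $\Ass(S/(hI)^k)$ is independent of $k$ iff $\Ass(S/I^k)$ is, which is exactly the claim.

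The main obstacle I anticipate is bookkeeping the height-one associated primes $(x_j)$ cleanly: one must argue that multiplying by the fixed monomial $h^k$ cannot \emph{create} an embedded prime $(x_j)$ at some power $k$ while destroying it at another — i.e., that the ``shift by $kb$'' is uniform enough that the yes/no answer to $(x_j) \in \Ass(S/I^k)$ transfers verbatim to $(x_j) \in \Ass(S/(hI)^k)$. The cleanest way to handle this is to use the characterization of normal torsion-freeness for monomial ideals via $I^{(k)} = I^k$ together with the fact (e.g.\ from \cite{HH1}) that the symbolic power $(hI)^{(k)}$ equals $h^k I^{(k)}$ when $h$ is a monomial, since the minimal primes of $hI$ containing a given $(x_j)$ are the same as those of $I$ unioned with $(x_j)$ itself, and the localizations at primes of height $\geq 2$ are unchanged. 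Then $(hI)^{(k)} = (hI)^k$ for all $k$ $\iff$ $h^k I^{(k)} = h^k I^k$ for all $k$ $\iff$ $I^{(k)} = I^k$ for all $k$, using that $S$ is a domain so multiplication by $h^k$ is injective. This routes around the delicate embedded-prime analysis and makes the proof short.
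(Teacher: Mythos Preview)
The paper does not prove this lemma; it merely quotes \cite[Lemma~3.12]{SN} and uses it as a black box. So there is no argument in the paper to compare against, and your proposal stands or falls on its own.

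Your overall strategy is sound, but two of the claims you rely on are false. First, the biconditional ``$(x_j)\in\Ass(S/I^k)\iff(x_j)\in\Ass(S/(hI)^k)$'' fails already for $I=(x_2)$, $h=x_1$, $k=1$: here $(x_1)\in\Ass(S/x_1x_2)$ but $(x_1)\notin\Ass(S/x_2)$. Your characterization of when $(x_j)\in\Ass(S/L)$ (``$x_j$-content not forced by a single generator'') is also off; the correct criterion is simply that $x_j$ divides every minimal generator of $L$, since a height-one prime containing $L$ is automatically minimal over $L$. The statement you actually need is that for any nonzero monomial ideal $J$ and any monomial $w$ one has $\Ass(S/wJ)=\Ass(S/J)\cup\{(x_i):x_i\mid w\}$; this follows from the short exact sequence $0\to S/J\xrightarrow{\,\cdot w\,}S/wJ\to S/(w)\to 0$ together with the observation that each $(x_i)$ with $x_i\mid w$ is a minimal prime of $wJ$. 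Applying this with $w=h^k$ and $J=I^k$ gives $\Ass(S/(hI)^k)=\Ass(S/I^k)\cup\{(x_i):x_i\mid h\}$ for every $k$, and the equivalence of normal torsion-freeness follows directly (for the converse direction note that if $(x_i)\in\Ass(S/I^k)$ then $(x_i)\supseteq I$, hence $(x_i)\in\Min(I)\subseteq\Ass(S/I)$, so the set $\{(x_i):x_i\mid h\}$ cannot mask a failure).

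Second, your ``cleaner'' route via symbolic powers does not work: the equality $(hI)^{(k)}=h^kI^{(k)}$ is false in general because $\Min(hI)$ and $\Min(I)$ need not coincide. For example, with $I=x_1(x_2,x_3)$ and $h=x_2$ one has $\Min(I)=\{(x_1),(x_2,x_3)\}$ but $\Min(hI)=\{(x_1),(x_2)\}$; then $(hI)^{(k)}=(x_1^kx_2^k)$ while $h^kI^{(k)}=x_1^kx_2^k(x_2,x_3)^k$, and these differ for every $k\geq 1$. So the symbolic-power shortcut should be abandoned in favor of the direct associated-prime comparison above.
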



In order to establish Theorem \ref{Normally torsion-freeness}, we require  the following auxiliary  proposition.
 For a given square-free monomial ideal $I \subset \KK[x_1, \ldots, x_n]$, we denote by $I \setminus x_i$  the ideal generated by 
 those elements in $\G(I)$ that do not contain $x_i$ in their support. 
	
	\begin{Proposition}\label{Case.2d}
		
			Let $\KT$ be a complete $\tb$-spread $d$-partite hypergraph with $V(\KT)\subseteq [n]$ and $V=\{V_1, \ldots, V_d\}$. 
	Furthermore, let  $|V_j|=2$   with 		 $V_j=\{i_j, i_j+1\}$ for all $j=1, \ldots, d$. Then $I(\KT)$  is normally torsion-free. 
	\end{Proposition}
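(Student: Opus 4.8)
The plan is to prove normal torsion-freeness of $I = I(\KT)$ in the special case $|V_j| = 2$ by induction on $d$, using the characterization that a square-free monomial ideal is normally torsion-free if and only if it is Mengerian, or equivalently (and more usefully here) by reducing to smaller hypergraphs of the same shape. The base case $d = 1$ is trivial since then $I$ is generated by a single variable (or, if we take $d=2$ as base, $I$ is the edge ideal of a bipartite graph, hence normally torsion-free by the classical result that edge ideals of bipartite graphs are normally torsion-free).

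The first step is to set up the notation: write $V_j = \{i_j, i_j+1\}$ and let $a = i_d$, $b = i_d + 1$ be the two vertices in the last part. Every minimal generator of $I$ has the form $w\cdot x_a$ or $w \cdot x_b$ where $w$ is a $\tb'$-spread edge on $V_1, \dots, V_{d-1}$ (with $\tb' = (t_1, \dots, t_{d-2})$), subject to the spread condition linking the $(d-1)$-st coordinate to $\{a,b\}$. The key reductions I would use are the deletion/contraction-type operations: $I \setminus x_a$ (generators not involving $x_a$) and the localization/link at $x_a$. By a standard lemma (the one used throughout this area — if $J = I \setminus x_i$ and $K = (I : x_i)$ are both normally torsion-free and satisfy a compatibility condition, then $I$ is normally torsion-free), it suffices to understand these two ideals. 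The ideal $I \setminus x_a$ is, after possibly removing $i_d$ as an isolated vertex, again the edge ideal of a complete $\tb$-spread $d$-partite hypergraph whose last part has size $1$ — but a part of size $1$ can be absorbed, so this is essentially $I(\KT)$ for a $(d-1)$-partite hypergraph with a modified $\tb$; by induction it is normally torsion-free. Similarly $(I : x_a)$ corresponds to contracting the vertex $a$, again yielding a complete $\tb$-spread hypergraph on fewer parts.

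The second main step is to invoke Lemma~\ref{LEM. Multipe}: multiplying a normally torsion-free ideal by a monomial preserves the property. This is what lets me clean up the "absorbed" parts of size $1$ and match generating sets exactly after the reductions, rather than worrying about the extra variables $x_{i_j}$ that appear in every generator of a part of size $1$. Concretely, after deleting $x_a$, every surviving generator still contains $x_b$ as a factor, so $I \setminus x_a = x_b \cdot I'$ where $I'$ is the edge ideal of a complete $\tb$-spread $(d-1)$-partite hypergraph (with last part $V_{d-1}$ and a spread vector adjusted to account for the constraint that formerly linked to $b$); Lemma~\ref{LEM. Multipe} then reduces normal torsion-freeness of $I \setminus x_a$ to that of $I'$, which holds by the inductive hypothesis. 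The same bookkeeping handles $(I:x_a)$.

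**The main obstacle.** The delicate point is verifying the compatibility condition in the deletion–contraction criterion for normal torsion-freeness: it is not enough that $I \setminus x_a$ and $(I : x_a)$ are each normally torsion-free; one needs that every embedded-prime-free situation persists, i.e. that $\Ass(I^k) = \Min(I)$ for all $k$, and the inductive step must track how associated primes of powers behave under both operations simultaneously. Here is where Corollary~\ref{V-Structure} becomes essential: it guarantees that the "corner" monomial $v = \prod_{j=1}^d x_{i_j}$ lies in $\pp \setminus \pp^2$ for every $\pp \in \Min(I)$, which is exactly the kind of non-degeneracy needed to run the standard argument (in the style of the proof that bipartite edge ideals are normally torsion-free) showing that no new associated prime can appear in any power. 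So the heart of the proof will be: assume $\pp \in \Ass(S/I^{k})$ for some minimal $k$; localize at $\pp$ and use the corner monomial together with the inductive normal-torsion-freeness of $I \setminus x_a$ and $(I:x_a)$ to derive that $\pp$ was already minimal — contradiction. I expect the brunt of the work to be the careful matching of generating sets after the reductions (confirming that what one gets really is $\KT$ for a smaller $d$ with an honestly $\tb$-spread structure, using that each $|V_j| = 2$ keeps the combinatorics rigid) rather than any deep new idea.
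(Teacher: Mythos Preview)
Your overall architecture matches the paper's: induction on $d$, the corner monomial $v=\prod_{j=1}^d x_{i_j}$ from Corollary~\ref{V-Structure}, and Lemma~\ref{LEM. Multipe} to strip off common variable factors after a deletion. The gap is in the reduction criterion. You propose a deletion--contraction scheme at a \emph{single} vertex $a=i_d$: show $I\setminus x_a$ and $(I:x_a)$ are each normally torsion-free, plus an unnamed ``compatibility condition'', then conclude for $I$. No such criterion is standard for normal torsion-freeness, you do not prove one, and the sketch you give (take $\pp\in\Ass(S/I^k)$, localize, invoke the corner monomial) does not supply the missing implication. Moreover $(I:x_a)$ here is a sum of an ideal generated in degree $d-1$ and one generated in degree $d$, so it is not itself an $I(\KT')$ and the inductive hypothesis does not apply to it directly.

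The paper instead invokes \cite[Theorem~3.7]{SNQ}, a \emph{deletion-only} criterion: if some monomial $v$ lies in $\pp\setminus\pp^2$ for every $\pp\in\Min(I)$, and $I\setminus x_r$ is normally torsion-free for \emph{each} $x_r\in\supp(v)$, then $I$ is normally torsion-free. Corollary~\ref{V-Structure} gives the first hypothesis. For the second, one must treat every $r\in\{i_1,\dots,i_d\}$, not just $i_d$; since $|V_j|=2$, deleting $x_{i_j}$ forces $x_{i_j+1}$ into every surviving generator, so $I\setminus x_{i_j}=x_{i_j+1}L$ with $L$ the edge ideal of a complete $\tb'$-spread $(d-1)$-partite hypergraph (after removing isolated vertices), and Lemma~\ref{LEM. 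Multipe} plus induction finish. So: keep your induction and your use of Lemma~\ref{LEM. Multipe}, drop the colon ideal, and replace the single deletion at $i_d$ by the $d$ deletions at $i_1,\dots,i_d$ required by \cite[Theorem~3.7]{SNQ}.
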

	\begin{proof}
To simplify the notation, set $I:=I(\KT)$. 
		We proceed by induction on $d$. If $d=1$,  then there is nothing to show. Hence, assume that $d>1$ and that the result holds for any 
		complete $\tb$-spread $(d-1)$-partite hypergraph.  Choose an arbitrary element $\mathfrak{p}\in \mathrm{Min}(I)$ and 
		set $v:=\prod_{j=1}^{d} x_{i_j}$. It follows at once from Corollary \ref{V-Structure} that    $v\in \mathfrak{p}\setminus \mathfrak{p}^2$. We show that 
		$I\setminus x_r$ is normally torsion-free for each $x_r \in \mathrm{supp}(v)$.  Without loss of generality, we let $V_1=\{1,2\}$ and we prove that $I\setminus x_1$ is normally torsion-free. It is not hard to check that $I\setminus x_1=x_2L$ where $L$ is the edge ideal of $\tb$-spread $d$-partite hypergraph with vertex partition $\V'=\{V'_2, \ldots, V'_d\}$ such that,  for all $i=2, \ldots, d$, the set $V'_i$ is obtained from $V_i$ after removing the isolated vertices, if any. One can conclude  from the inductive hypothesis that $L$ is normally torsion-free. Here, using Lemma~\ref{LEM. Multipe} implies that  
		$I\setminus x_1$ is normally torsion-free.  It follows now from \cite[Theorem 3.7]{SNQ}  that $I$ is normally torsion-free, as claimed. 
		\end{proof}

	
	\begin{Theorem} \label{Normally torsion-freeness}
		Let $\KT$ be a complete $\tb$-spread $d$-partite hypergraph with $V(\KT)\subseteq [n]$ and $V=\{V_1, \ldots, V_d\}$.  Furthermore, let $|V_j|=n_j$ with  $V_j=[i_j, i_j+n_j-1]$ for all $j=1, \ldots, d$. 		Then $I(\KT)$ is normally torsion-free. In particular, 	$I(\KT)$ is normal. 
	\end{Theorem}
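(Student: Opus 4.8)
The plan is to reduce the general case to the case of all parts having size two, which was already handled in Proposition~\ref{Case.2d}, and then to invoke the same deletion-and-induction machinery used there. More precisely, I would argue by induction on $d$, the number of parts, exactly as in Proposition~\ref{Case.2d}. The base case $d=1$ is trivial. For the inductive step, fix an arbitrary $\mathfrak{p}\in\Min(I(\KT))$ and set $v:=\prod_{j=1}^d x_{i_j}$, where $i_j=\min V_j$. By Corollary~\ref{V-Structure} we have $v\in\mathfrak{p}\setminus\mathfrak{p}^2$. By \cite[Theorem 3.7]{SNQ}, to conclude that $I(\KT)$ is normally torsion-free it suffices to show that $I(\KT)\setminus x_r$ is normally torsion-free for every $x_r\in\supp(v)$; by symmetry we may treat $x_{i_1}$, i.e.\ delete the smallest vertex of $V_1$.

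The key step is to identify the structure of $I(\KT)\setminus x_{i_1}$. Every minimal generator of $I(\KT)$ that avoids $x_{i_1}$ uses some other vertex $k\in V_1$ with $k>i_1$; the vertex $i_1+1$ is still available in $V_1$, and in fact $i_1+1$ divides \emph{every} generator of $I(\KT)\setminus x_{i_1}$ precisely when $n_1\geq 2$. Wait --- more carefully: the generators avoiding $x_{i_1}$ are exactly those whose $V_1$-vertex lies in $V_1\setminus\{i_1\}=[i_1+1,i_1+n_1-1]$, and these are precisely the generators of the edge ideal of the complete $\tb$-spread $d$-partite hypergraph on the parts $V_1'=[i_1+1,i_1+n_1-1],V_2,\ldots,V_d$ (after removing any vertices of $V_2,\ldots,V_d$ that become isolated). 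So $I(\KT)\setminus x_{i_1}=I(\KK^{\tb}_{\V'})$ for a \emph{complete $\tb$-spread $d$-partite hypergraph with strictly fewer vertices in the first part}. This is not immediately a $(d-1)$-partite hypergraph, so a single application of the inductive hypothesis on $d$ does not close the argument; I would instead run a secondary induction on $|V(\KT)|$, or equivalently observe that repeating the deletion eventually forces $|V_1'|=1$, at which point the first part can be absorbed (the variable $x_{i_1+n_1-1}$ becomes a common factor $h$ as in Lemma~\ref{LEM. Multipe}), leaving a complete $\tb'$-spread $(d-1)$-partite hypergraph, to which the inductive hypothesis on $d$ applies. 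Then Lemma~\ref{LEM. Multipe} removes the common factor $h$ and preserves normal torsion-freeness.

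So the clean way to organize this is a double induction: the outer induction is on $d$, and inside the inductive step I would do an inner induction on $n_1=|V_1|$ (the size of the first part). When $n_1=1$, writing $x_{i_1}$ as a common factor reduces to the $(d-1)$-part case handled by the outer hypothesis. When $n_1\geq 2$, delete $x_{i_1}$: by \cite[Theorem 3.7]{SNQ} and Corollary~\ref{V-Structure} it is enough that $I(\KT)\setminus x_{i_1}$ be normally torsion-free, and that ideal is (up to removing isolated vertices) $I(\KK^{\tb}_{\V'})$ with $|V_1'|=n_1-1$, so the inner hypothesis applies. The main obstacle, and the point that needs to be checked with care, is the identification $I(\KT)\setminus x_{i_1}=I(\KK^{\tb}_{\V'})$: one must verify that deleting the smallest vertex of $V_1$ does not merely delete \emph{some} edges through $i_1$ but yields \emph{exactly} the complete $\tb$-spread hypergraph on the truncated parts --- i.e.\ that every $\tb$-spread set on $V_1',V_2,\ldots,V_d$ genuinely was an edge of $\KT$ (clear, since the $\tb$-spread condition is inherited) and conversely that no spurious generators appear. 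One also has to be careful bookkeeping about isolated vertices created in $V_2,\ldots,V_d$ after the truncation, but this is harmless since the convention throughout the paper is to discard isolated vertices and the edge ideal is unaffected. The final sentence, that normal torsion-freeness implies normality, is immediate from \cite[Theorem 1.4.6]{HH1} or the standard fact that normally torsion-free squarefree monomial ideals are normal.
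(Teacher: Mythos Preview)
Your strategy uses exactly the same ingredients as the paper's proof --- Corollary~\ref{V-Structure} to produce the monomial $v=\prod_j x_{i_j}$ lying in $\mathfrak p\setminus\mathfrak p^2$ for every minimal prime, \cite[Theorem~3.7]{SNQ} to reduce to the deletions $I(\KT)\setminus x_r$, Lemma~\ref{LEM. Multipe} to strip off common factors, and the observation that each deletion again produces the edge ideal of a complete $\tb$-spread hypergraph on intervals. The identification $I(\KT)\setminus x_{i_j}=I(\KK^{\tb}_{\V'})$ is correct, and your remarks about cascading isolated vertices are fine.

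The gap is in the induction scheme of your ``clean'' version. The hypothesis of \cite[Theorem~3.7]{SNQ} requires that $I(\KT)\setminus x_r$ be normally torsion-free for \emph{every} $x_r\in\supp(v)$, i.e.\ for $r=i_1,\ldots,i_d$, not just for $r=i_1$. Your sentence ``by symmetry we may treat $x_{i_1}$'' and later ``it is enough that $I(\KT)\setminus x_{i_1}$ be normally torsion-free'' are not justified: there is no symmetry of $\KT$ interchanging $V_1$ with $V_j$ in general, and deleting $x_{i_j}$ for $j>1$ leaves $|V_1|=n_1$ unchanged (the cascade of isolated vertices created by removing $\min V_j$ only propagates \emph{forward} to $V_{j+1},V_{j+2},\ldots$, never back to $V_1$). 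Hence your inner induction on $n_1$ does not cover those cases.

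The fix is the one you mention in passing but then abandon: run the inner induction on $|V(\KT)|$ rather than on $n_1$. Every deletion $I(\KT)\setminus x_{i_j}$ strictly decreases $|V(\KT)|$, so the inductive hypothesis applies uniformly for all $j$. This is precisely the organization in the paper: first peel off any parts of size~$1$ via Lemma~\ref{LEM. Multipe} to reduce to $n_j\geq 2$ for all $j$, then induct on $|V(\KT)|$ with base case $|V(\KT)|=2d$ supplied by Proposition~\ref{Case.2d}. With that single change your argument becomes complete and coincides with the paper's.
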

	
	\begin{proof}
		We first assume that $|V_j|=1$ for some $1\leq j \leq d$, say $V_j=\{z\}$. Let $I=I(\KT)$. Then we can write $I=x_zL$ such that  $L$  can be viewed as the edge ideal associated to a complete $\tb$-spread $(d-1)$-partite hypergraph. According to Lemma \ref{LEM. Multipe}, $I$ is normally torsion-free if and only if $L$ is normally torsion-free. Thus, we reduce to the case $|V_j|\geq 2$ for all  $j=1, \ldots, d$. Set $v:=\prod_{j=1}^{d}x_{i_j}$. Pick an arbitrary element $\mathfrak{p}\in \mathrm{Min}(I)$. One can derive from Corollary \ref{V-Structure}  that 
		$v\in \mathfrak{p}\setminus \mathfrak{p}^2$. To complete the proof, it is sufficient to establish $I\setminus x_s$ in normally torsion-free for each  
		$x_s \in \mathrm{supp}(v)$.  To accomplish this, we use the induction on $n:=|V(\KT)|$. On account of $|V_j|\geq 2$ for all  $j=1, \ldots, d$, this implies that $n\geq 2d$. 
		The case in which $n=2d$ can be deduced  according to Proposition \ref{Case.2d}. 
		Now, suppose that $n>2d$.  It is not hard to see that $I\setminus x_s$ is again the  edge ideal of  the  $\tb$-spread $d$-partite hypergraph obtained from  $\KT$ by removing all the edges  that contain $s$. One can deduce from the inductive hypothesis that $I\setminus x_s$ is normally torsion-free. Here, in view of  \cite[Theorem 3.7]{SNQ}, we conclude that $I$ is normally torsion-free, as desired.   
	
	The last assertion can be deduced according to \cite[Theorem 1.4.6]{HH1}.  
	\end{proof}


We can readily provide  the following corollary inspired by Theorem \ref{Normally torsion-freeness}.   A {\em matching} in a hypergraph $\H$ is a family of pairwise disjoint edges, and the maximum cardinality of a matching is denoted by $\nu (\H)$. The transversal number of a hypergraph $\H$, denoted by $\tau(\H)$ is the minimal cardinality of a transversal of $\H$.  A hypergraph $\H$ is said to satisfy the K\"onig property if $\nu (\H)=\tau(\H)$, see \cite[Chapter 2, Section 4]{B}. 

\begin{Corollary} \label{Konig-Property}
Let $\KT$ be a complete $\tb$-spread $d$-partite hypergraph.  Then  $I(\KT)$ satisfies  the K\"onig property.
\end{Corollary}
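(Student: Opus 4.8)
The plan is to derive the K\"onig property of $\KT$ directly from the normal torsion-freeness of $I(\KT)$ established in Theorem~\ref{Normally torsion-freeness}, together with the dictionary between combinatorial invariants of a hypergraph and algebraic invariants of its edge ideal. Recall that for a clutter $\H$, the matching number $\nu(\H)$ equals the \emph{monomial grade} of $I(\H)$, i.e.\ the length of a maximal regular sequence of squarefree generators, while the transversal number $\tau(\H)$ is the height of $I(\H)$ (equivalently, the minimal cardinality of a vertex cover, which is the size of a smallest minimal prime of $I(\H)$ read off from $\G(I(\H)^\vee)$). One always has $\nu(\H)\le\tau(\H)$, so the content is the reverse inequality.

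First I would recall the theorem of Gitler–Villarreal (see also \cite[Corollary 14.3.18]{VH}) that an edge ideal $I(\H)$ is normally torsion-free if and only if $\H$ has the \emph{max-flow-min-cut property}, which in particular forces the K\"onig property $\nu(\H)=\tau(\H)$; equivalently, a squarefree monomial ideal $I$ satisfies $\grade(I^k)=\ell(I)$ stabilization and the packing property precisely when it is normally torsion-free, and normally torsion-free clutters are Mengerian, hence pack, hence satisfy K\"onig. So the main step is simply to invoke Theorem~\ref{Normally torsion-freeness}: since $I(\KT)$ is normally torsion-free, $\KT$ is a Mengerian (equivalently, Fulkersonian) hypergraph, and every Mengerian hypergraph satisfies the K\"onig property.

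Concretely, the proof would read: By Theorem~\ref{Normally torsion-freeness}, $I(\KT)$ is normally torsion-free. By \cite[Corollary 10.3.15]{HH1} (or the equivalent statement in \cite[Chapter 14]{VH}), a squarefree monomial ideal is normally torsion-free if and only if the associated clutter has the max-flow-min-cut property, and clutters with the max-flow-min-cut property satisfy the K\"onig property. Hence $\nu(\KT)=\tau(\KT)$, i.e.\ $\KT$ satisfies the K\"onig property. One could optionally add the remark that since all hypergraphs in this paper are assumed to have no isolated vertices, the edges $\{a_1,\dots,a_d\}$ and $\{b_1,\dots,b_d\}$ witness concrete extremal matchings, but this is not needed for the argument.

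I do not expect a serious obstacle here: the only subtlety is citing the precise form of the equivalence ``normally torsion-free $\Rightarrow$ max-flow-min-cut $\Rightarrow$ K\"onig'' in a way consistent with the references already used in the paper (\cite{HH1}, \cite{B}, and whatever source the authors used for Mengerian hypergraphs in Theorem~\ref{Normally torsion-freeness}). If one wanted a fully self-contained argument avoiding the max-flow-min-cut theorem, the alternative would be to show $\height I(\KT) = \grade I(\KT)$ directly by exhibiting, from the description of $\G(I(\KT)^\vee)$ in Theorem~\ref{TH.ASSPRIMES}, a minimal prime of smallest height and a regular sequence of generators of the same length using the interval structure of the $V_i$; but given Theorem~\ref{Normally torsion-freeness} is already in hand, the one-line deduction via Mengerian $\Rightarrow$ K\"onig is the natural route.
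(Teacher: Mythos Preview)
Your proposal is correct and follows essentially the same route as the paper: both deduce the K\"onig property from Theorem~\ref{Normally torsion-freeness} via the chain normally torsion-free $\Rightarrow$ max-flow-min-cut $\Rightarrow$ packing/Mengerian $\Rightarrow$ K\"onig, with the paper citing \cite[Theorem 14.3.6 and Corollary 14.3.18]{VI2} and \cite[Definition 2.3]{HM} where you point to \cite{HH1} or \cite{VH}. The paper also notes, just after the corollary, the alternative direct argument you mention (exhibiting an explicit matching of size $\min_i n_i$ from the interval structure), so your optional remark is anticipated there as well.
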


\begin{proof}
Based on Theorem \ref{Normally torsion-freeness}, we get  $I(\KT)$  is normally torsion-free. In addition, by virtue of  \cite[Theorem 14.3.6]{VI2}, one can deduce that $\KT$ has the max-flow min-cut  property. It follows now  from \cite[Corollary 14.3.18]{VI2} that $\KT$  has the packing property. On the other hand, by virtue of    \cite[Definition 2.3]{HM}, we obtain  $I(\KT)$  satisfies the   K\"onig property. This completes the proof. 
\end{proof} 
	
		
Next, we give a  characterization of Cohen-Macaulay $I(\KT)$. To do this, we first determine the height of  $I(\KT)$.
	\begin{Proposition}\label{Prop:height}
		Let $\KT$ be a complete $\tb$-spread $d$-partite hypergraph with $V(\KT)\subseteq [n]$ and $V=\{V_1, \ldots, V_d\}$.  Furthermore, let $|V_j|=n_j$  with  $V_j=[i_j, i_j+n_j-1]$ for all $j=1, \ldots, d$. Then $\mathrm{ht}(I(\KT))=\min\{n_1, \ldots, n_d\}$, where $\mathrm{ht}(I(\KT))$ denotes the height of $I(\KT)$.
	\end{Proposition}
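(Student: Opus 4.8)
The plan is to translate the statement into hypergraph language. Recall (as noted in the introduction) that the minimal primes of the edge ideal $I(\KT)$ are exactly the ideals $(x_k : k\in C)$ with $C$ a minimal transversal of $\KT$, and such a monomial prime has height $|C|$; hence $\mathrm{ht}(I(\KT))=\tau(\KT)$, the minimum cardinality of a transversal. So it suffices to show $\tau(\KT)=\min\{n_1,\dots,n_d\}$. The inequality $\tau(\KT)\le\min\{n_1,\dots,n_d\}$ is immediate: if $n_m=\min\{n_1,\dots,n_d\}$, then every edge of $\KT$ is a $\tb$-spread set and therefore meets $V_m$ in exactly one vertex, so $V_m$ is itself a transversal, of cardinality $n_m$.

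For the reverse inequality I would exhibit a matching of size $m:=\min\{n_1,\dots,n_d\}$ in $\KT$, which forces $\tau(\KT)\ge m$. The crucial preliminary observation is that $\{i_1,i_2,\dots,i_d\}$ is a $\tb$-spread edge of $\KT$: since $\KT$ has no isolated vertices, the vertex $i_{k+1}=\min V_{k+1}$ lies in some edge $E$; letting $e$ be the unique element of $E\cap V_k$, $\tb$-spreadness of $E$ gives $i_{k+1}-e\ge t_k$, and since $i_k\le e$ we obtain $i_{k+1}-i_k\ge t_k$ for every $k=1,\dots,d-1$. Now for each $r\in\{0,1,\dots,m-1\}$ put $E_r:=\{i_1+r,\dots,i_d+r\}$. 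Since $r\le m-1\le n_\ell-1$ we have $i_\ell+r\in V_\ell$, and $(i_{k+1}+r)-(i_k+r)=i_{k+1}-i_k\ge t_k$, so $E_r$ is a $\tb$-spread set and hence an edge of the complete hypergraph $\KT$. The edges $E_0,\dots,E_{m-1}$ are pairwise disjoint, because the $V_\ell$ are pairwise disjoint and $i_\ell+r$ determines $r$. Thus $\{E_0,\dots,E_{m-1}\}$ is a matching of size $m$, any transversal must contain a distinct vertex from each of these edges, and so $\tau(\KT)\ge m$.

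The only subtle point is the claim that $\{i_1,\dots,i_d\}$ is an edge; once that is in hand everything else is elementary bookkeeping, and the two inequalities combine to give $\mathrm{ht}(I(\KT))=\min\{n_1,\dots,n_d\}$. Alternatively, one could read the result off the explicit description in Theorem~\ref{TH.ASSPRIMES}: the generators of $I(\KT)^\vee$ of type (i) have degrees $n_1,\dots,n_d$, while a short computation using conditions (\ref{condition:1}) and (\ref{condition:2}) shows that the type (ii) generator attached to a pair $1\le j<p\le d$ has degree $n_p+\sum_{k=j}^{p-1}(i_{k+1}-i_k-t_k)$, which is at least $n_p\ge\min\{n_1,\dots,n_d\}$ because $i_{k+1}-i_k\ge t_k$; since $\mathrm{ht}(I(\KT))$ equals the least degree of a generator of $I(\KT)^\vee$, this again yields the claim.
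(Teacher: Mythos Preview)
Your proof is correct and follows essentially the same approach as the paper: both exhibit the matching $E_r=\{i_1+r,\dots,i_d+r\}$ for $r=0,\dots,m-1$ to get the lower bound and use the transversal $V_m$ (equivalently, the prime $(x_k:k\in V_m)$) for the upper bound. You supply more detail than the paper in justifying that $\{i_1,\dots,i_d\}$ is actually an edge, and your alternative argument via the degrees of $I(\KT)^\vee$ is a valid shortcut that the paper develops only later, just before Theorem~\ref{thm:unmixed}.
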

	
	\begin{proof}
		Let $I:=I(\KT)$ and $n_k=\min\{n_1, \ldots, n_d \}$. Since $\KT$ does not contain any isolated vertices,  this  yields that 
\begin{equation}\label{eq:konig}
	\{i_1,\ldots,i_d\}, \{i_1 +1,\ldots,i_d +1\}, \ldots, \{i_1+n_k-1,\ldots,i_d+n_k-1\},
\end{equation} are pairwise disjoint $\tb$-spread edges in $\KT$.  Hence, we obtain the following monomials  $$x_{i_1}x_{i_2}\ldots x_{i_d}, x_{i_1 +1}x_{i_2 +1}\ldots x_{i_d +1},\ldots,x_{i_1 +n_k-1}x_{i_2 +n_k-1}\ldots x_{i_d +n_k-1}$$
	belong to  $\G(I)$. This gives that $\mathrm{ht}(I)\geq n_k$. It follows also from  Theorem~\ref{TH.ASSPRIMES} that  $(x_i : i \in V_k)$ is a minimal prime of $I$ with height $n_k$. This finishes our proof.  
	\end{proof}
	
	Note that the K\"onig property of $\KT$ can be also observed from the proof of above proposition. Indeed, the inequality $\nu (\H)\leq \tau(\H)$ holds for any hypergraph $\H$ and the $\tb$-spread edges given in (\ref{eq:konig}) give a maximal matching in $\KT$.
	

	Under the assumptions of  Theorem~\ref{TH.ASSPRIMES}, one can compute the degree of generators of $I^\vee=I(\KT)^\vee$. It is easy to see that $\deg \prod_{k \in V_i} x_k = n_i$  for all $i=1, \ldots, d$. Now, let $u \in \G(I^\vee)$ of  the  form (ii) for some $1 \leq j < p \leq d$ and $q_j, \ldots, q_p$. Then $u= (\prod_{i=j}^p\prod_{k\in V_i}x_k) / (\prod_{i=j}^{p-1}v_{q_i}  \prod_{i=j+1}^{p}v_{q'_i}  )$. Let $h$ be the product of variables with indices in $[i_j, i_p+n_p-1] \setminus (V_j \cup \cdots \cup  V_p)$ and $w=(uh)/h$.  Then $\deg w= \deg u$. 
	
	We have $\deg h (\prod_{i=j}^p\prod_{k\in V_i}x_k) = (i_p +n_p-1)-i_j+1$. Moreover, it follows from the  condition (\ref{condition:2}) that  $\deg (h  \prod_{i=j}^{p-1}v_{q_i}  \prod_{i=j+1}^{p}v_{q'_i}  ) =\sum_{i=j}^{p-1} t_i$. We thus get  
$$\deg w= (i_p +n_p-1)-i_j+1-\sum_{i=j}^{p-1} t_i= i_p-i_j+n_p-\sum_{i=j}^{p-1} t_i.$$ 
 Hence, we obtain 
	\begin{equation}\label{eq:deg}
		\deg u = i_p-i_j+n_p-\sum_{i=j}^{p-1} t_i.
	\end{equation}
	
		A square-free monomial ideal is said to be {\em unmixed} if its minimal prime ideals are of the same height. Using the description of generators of $I(\KT)^\vee$ and their degrees, we obtain the following characterization for unmixedness of $I(\KT)$.
	\begin{Theorem}\label{thm:unmixed}
	Let $\KT$ be a complete $\tb$-spread $d$-partite hypergraph with $V(\KT)\subseteq [n]$ and $V=\{V_1, \ldots, V_d\}$.  Furthermore, let $|V_j|=n_j$ 
with  $V_j=[i_j, i_j+n_j-1]$ for all $j=1, \ldots, d$.  Then $I(\KT)$ is unmixed if and only if $n_1=\cdots=n_d=s$, and  for each $j=1, \ldots, d-1$ either $i_{j+1} - (i_{j}+s-1) > t_j-1$  or $i_{j+1}-i_j=t_j$. 
	\end{Theorem}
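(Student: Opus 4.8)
The plan is to exploit Alexander duality. By the discussion preceding Theorem~\ref{TH.ASSPRIMES}, the minimal primes of $I:=I(\KT)$ are exactly the monomial primes generated by the supports of the minimal generators of $I^\vee:=I(\KT)^\vee$, and the height of such a prime equals the degree of the corresponding generator. Hence $I(\KT)$ is unmixed if and only if every element of $\G(I^\vee)$ has the same degree. Theorem~\ref{TH.ASSPRIMES} lists these generators: the type-(i) ones $\prod_{k\in V_j}x_k$ have degree $n_j$, while by formula (\ref{eq:deg}) a type-(ii) generator attached to a pair $1\le j<p\le d$ (coming from some admissible sequence $q_j,\dots,q_{p-1}$) has degree $i_p-i_j+n_p-\sum_{\ell=j}^{p-1}t_\ell$, which does not depend on the chosen sequence. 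So the whole problem reduces to comparing these degrees.

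Throughout I will write $g_\ell:=i_{\ell+1}-(i_\ell+t_\ell)$ for $1\le\ell\le d-1$; since $\KT$ has no isolated vertices, $\{i_1,\dots,i_d\}$ (the minimum vertices of the blocks) is a $\tb$-spread edge, so $g_\ell\ge 0$. One checks that, once $n_j=s$, the stated dichotomy at the index $j$ reads precisely ``$g_j=0$ or $g_j>s-2$''. For the forward direction, if $I$ is unmixed then the type-(i) degrees $n_1,\dots,n_d$ coincide, say $n_1=\cdots=n_d=s$. Writing $i_p-i_j=\sum_{\ell=j}^{p-1}(i_{\ell+1}-i_\ell)$ and $n_p=s$, formula (\ref{eq:deg}) turns into: a type-(ii) generator for $(j,p)$ has degree $s+\sum_{\ell=j}^{p-1}g_\ell$. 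I then specialize to consecutive pairs $p=j+1$: there (\ref{condition:1}) is vacuous and (\ref{condition:2}) forces $q'_{j+1}=s-2-g_j-q_j$, so a valid generator exists if and only if $g_j\le s-2$ (if $g_j\le s-2$ take $q_j=0$, whence $q'_{j+1}=s-2-g_j\ge 0$ and the two proper-subset requirements hold; conversely $q'_{j+1}\ge 0$ and $q_j\ge 0$ force $g_j\le s-2$). If such a generator exists, unmixedness forces its degree $s+g_j$ to equal $s$, i.e. $g_j=0$; if it does not exist, then $g_j>s-2$. Either way the dichotomy holds at $j$. (When $s=1$ there are no type-(ii) generators and the dichotomy holds trivially since $g_j\ge 0>s-2$.)

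For the converse, assume $n_1=\cdots=n_d=s$ and that the dichotomy holds for every $j$. The type-(i) generators all have degree $s$. Given any type-(ii) generator for a pair $(j,p)$ with admissible $q_j,\dots,q_{p-1}$, condition (\ref{condition:2}) yields, for each $\ell$ with $j\le\ell\le p-1$, the identity $q'_{\ell+1}=s-2-g_\ell-q_\ell$; since $q'_{\ell+1}\ge 0$ and $q_\ell\ge 0$ this forces $g_\ell\le s-2$. Combining with the dichotomy ($g_\ell=0$ or $g_\ell>s-2$) gives $g_\ell=0$ for every $\ell\in\{j,\dots,p-1\}$, so the generator has degree $s+\sum_{\ell=j}^{p-1}g_\ell=s$. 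Hence every element of $\G(I^\vee)$ has degree $s$, so $I(\KT)$ is unmixed, and along the way $\mathrm{ht}(I(\KT))=s$ (consistent with Proposition~\ref{Prop:height}).

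The one step that is not pure bookkeeping is the observation, used in both directions, that the mere existence of a type-(ii) generator for a pair $(j,p)$ already forces $g_\ell\le s-2$ on the whole block $j\le\ell\le p-1$ — this drops out of $q'_{\ell+1}\ge 0$ via (\ref{condition:2}). That is what collapses the a priori large family of degree constraints, one for each pair $(j,p)$ (and each admissible $q$-sequence), down to the $d-1$ consecutive-pair conditions recorded in the statement; everything else is substitution into (\ref{eq:deg}) and unwinding (\ref{condition:1})–(\ref{condition:2}).
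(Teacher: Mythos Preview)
Your proof is correct and follows essentially the same route as the paper's: both arguments reduce unmixedness to all generators of $I^\vee$ having the same degree via Theorem~\ref{TH.ASSPRIMES}, use the type-(i) generators to force $n_1=\cdots=n_d=s$, specialize to consecutive pairs $p=j+1$ in the forward direction, and in the converse use condition~(\ref{condition:2}) to show that any type-(ii) generator forces $i_{\ell+1}-i_\ell=t_\ell$ on its entire block. Your introduction of $g_\ell=i_{\ell+1}-i_\ell-t_\ell$ streamlines the bookkeeping and makes the degree formula $s+\sum g_\ell$ more transparent, but the underlying argument is the same.
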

	
	\begin{proof}
		Let $I=I(\KT)$ be  unmixed. Then every minimal prime of $I$ has  the  same height, equivalently, $I^\vee$ is generated in the same degree. By Theorem~\ref{TH.ASSPRIMES}, we know that every $V_j$ corresponds to a minimal generator in $I^\vee$,  and this yields $n_1=\cdots =n_d$. Let $n_1=\cdots=n_d=s$. We only need to show that for each $j=1, \ldots, d-1$ either $i_{j+1} - (i_{j}+s-1) > t_j-1$  or $i_{j+1}-i_j=t_j$. Indeed, if  $i_{j+1} - (i_{j}+s-1) \leq t_j-1$ for some $j$, then we obtain $u \in \G(I^\vee)$ of the form (ii) with $p=j+1$ and a suitable choice of $q_{j}$ and $q'_{j+1}$ as described in statement  $(*)$  in the proof of Theorem~\ref{TH.ASSPRIMES}. It follows from (\ref{eq:deg}) that $\deg u=  i_{j+1}-i_j+s-t_j$. Since $\deg u=s$, we obtain $i_{j+1}-i_j=t_j$. 
		
		Now,  assume that  for all $j=1, \ldots, d$  we have  $n_j=s$ and  for each $j=1, \ldots, d-1$   either $i_{j+1} - (i_{j}+s-1) > t_j-1$  or $i_{j+1}-i_j=t_j$. Then all generators of $I^\vee$ of the form (i) have same degree $s$. If $I^\vee$ has no generator of the form (ii), then the proof is complete. Otherwise, 
		let $u \in \G(I^\vee)$ of the form (ii) for some $j, p$ and $q_j \ldots, q_{p-1}$. Then, for all $\ell =j , \ldots, p-1 $,
	 we have  $i_{\ell+1}-i_\ell=t_{\ell}$, because if  $i_{\ell+1} - (i_{\ell}+s-1) > t_{\ell}-1$ for some $\ell$,   then $q_{\ell}$ and $q'_{\ell+1}$ do not satisfy the  condition (\ref{condition:2}). This gives that $i_p= i_j+\sum_{i=j}^{p-1} t_i$. Using (\ref{eq:deg}), we obtain  
	$$\deg u =  i_p-i_j+s-\sum_{i=j}^{p-1} t_i= i_j+\sum_{i=j}^{p-1} t_i-i_j+s-\sum_{i=j}^{p-1} t_i =s,$$ and the proof is done.  
	\end{proof}


\begin{Remark} {\em 
Let $V=\{V_1,V_2,V_3,V_4\}$ with $V_1=[2,4]$, $V_2=[6,8]$, $V_3=[9,11]$, $V_4=[13,15]$,  and $\tb=(2,3,4)$. 
By virtue of  Theorem~\ref{thm:unmixed}, the edge ideal $I=I(\KT)$ is unmixed. In fact, by using Theorem~\ref{TH.ASSPRIMES}, the minimal primes of 
$I$ are  as follows:
\begin{align*}
\mathrm{Ass}(I)=\{&(x_2,x_3,x_4), (x_6,x_7,x_8), (x_9,x_{10},x_{11}), (x_{13},x_{14},x_{15}), (x_6,x_7,x_{11}), \\
& (x_6,x_7,x_{15}),   (x_6,x_{10},x_{11}), (x_6,x_{10},x_{15}), (x_6,x_{14},x_{15}),  (x_9,x_{10},x_{15}), \\
& (x_9,x_{14},x_{15})\}. 
\end{align*}
 However, one can verify with {\em Macaulay2 }
\cite{M2} that $S/I$ is not Cohen-Macaulay. }
\end{Remark}


The above remark  states that  unmixedness is not a sufficient for the edge ideal of $\tb$-spread $d$-partite hypergraphs being Cohen-Macaulay. 
In what follows, we give a characterization of $\KT$ with Cohen-Macaulay edge ideals. To do this, we introduce the  following notations, that is, 
 $q(u_k):= |\set(u_k)|$ and $q(I):= \max\{q(u_1), \ldots, q(u_r)\}$. 
\bigskip

We are in a position to state the last result of this section in the subsequent  theorem.

	\begin{Theorem}\label{CM}
		Let $\KT$ be a complete $\tb$-spread $d$-partite hypergraph with $V(\KT)\subseteq [n]$ and $V=\{V_1, \ldots, V_d\}$.  Furthermore, let $|V_j|=n_j$ with  $V_j=[i_j, i_j+n_j-1]$ for all $j=1, \ldots, d$.  Then $S/I(\KT)$ is Cohen-Macaulay if and only if  either $I(\KT)$ is a principal ideal, or  $n_1=\cdots= n_d=s$ and $i_{j+1}-i_{j}=t_j$ for each $j=1, \ldots, d-1$. 
	\end{Theorem}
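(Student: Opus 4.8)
The plan is to reduce the ``if'' direction to two already-established facts and to extract the ``only if'' direction from the structure of $\G(I(\KT)^\vee)$ given in Theorem~\ref{TH.ASSPRIMES}. For the easy case, if $I(\KT)$ is principal then $S/I(\KT)$ is a complete intersection, hence Cohen-Macaulay. For the remaining ``if'' case, assume $n_1=\cdots=n_d=s$ and $i_{j+1}-i_j=t_j$ for all $j$. First I would observe that under this hypothesis the hypergraph $\KT$ is, up to relabelling of the underlying vertex set (which only inserts ``gap'' variables that split off as a polynomial extension, changing nothing homologically), exactly the complete $d$-partite hypergraph with all parts of size $s$ whose edge ideal is the well-studied one; equivalently $I(\KT)$ coincides with a transversal-type ideal whose Alexander dual, by Theorem~\ref{TH.ASSPRIMES}, has \emph{no} generators of the form (ii) (since condition (\ref{condition:2}) together with $i_{j+1}-i_j=t_j$ forces $q_\ell=q'_{\ell+1}=0$, and then condition (\ref{condition:1}) fails whenever $p>j+1$, while for $p=j+1$ the resulting monomial is already a multiple of a form-(i) generator). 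Thus $I(\KT)^\vee$ is generated in the single degree $s$, so $I(\KT)$ is unmixed of height $s$ by Proposition~\ref{Prop:height} and Theorem~\ref{thm:unmixed}. Then I would invoke the Eagon--Reiner theorem: $S/I_\Delta$ is Cohen-Macaulay iff $I_\Delta^\vee$ has a linear resolution. Since each generator of $I(\KT)^\vee$ is $\prod_{k\in V_i}x_k$, the dual ideal is a ``squarefree Veronese-like'' ideal supported on disjoint intervals; concretely $I(\KT)^\vee=\prod_{i=1}^d \mathfrak p_i$ where $\mathfrak p_i=(x_k:k\in V_i)$ are prime ideals on disjoint variable sets, and a product of ideals generated by linear forms on pairwise disjoint variable sets has linear quotients (order the generators lexicographically; colon ideals are generated by variables), hence linear resolution. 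By Eagon--Reiner, $S/I(\KT)$ is Cohen-Macaulay.

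\textbf{The ``only if'' direction.} Suppose $S/I(\KT)$ is Cohen-Macaulay and $I(\KT)$ is not principal. Cohen-Macaulayness implies unmixedness, so Theorem~\ref{thm:unmixed} already gives $n_1=\cdots=n_d=s$ and, for each $j$, either $i_{j+1}-(i_j+s-1)>t_j-1$ or $i_{j+1}-i_j=t_j$. I must rule out the first alternative (a genuine ``gap'' between consecutive parts) whenever it does not reduce to the principal case. The idea is that a gap makes $\KT$ disconnected in the sense relevant to depth: if $i_{j+1}-(i_j+s-1)>t_j-1$, then \emph{every} $\tb$-spread edge is automatically $\tb$-spread across the $j/(j+1)$ boundary, so $I(\KT)$ factors as a product $I(\KT')\cdot I(\KT'')$ of edge ideals on disjoint variable sets (the parts $V_1,\dots,V_j$ and $V_{j+1},\dots,V_d$), each non-principal unless $s=1$ on that side. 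For a product of ideals $J'J''$ on disjoint variables, $S/J'J''$ has a well-understood resolution (tensor/Künneth), and $\depth S/(J'J'')= \depth S'/J' + \depth S''/J'' + (\text{number of gap variables})$; one computes $\dim S/(J'J'')$ similarly and checks the Cohen-Macaulay defect is strictly positive unless one side is principal. Carrying this out inductively on $d$, the only way to avoid a non-principal factor split off by a gap is to have no gap at all, i.e.\ $i_{j+1}-i_j=t_j$ for every $j$; the base cases $s=1$ (then $I(\KT)$ is principal, contrary to assumption) and $d=1$ are immediate.

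\textbf{Main obstacle.} The delicate point is the product/gap argument in the ``only if'' direction: I need the precise statement that when a gap occurs at position $j$, the edge ideal genuinely \emph{is} a product of two edge ideals on disjoint variable sets (this uses that the gap forces the cross-part $\tb$-spread condition to be vacuous for \emph{all} pairs of vertices, not just the minimal ones) and then the depth/dimension bookkeeping for products of ideals on disjoint variables. An alternative, perhaps cleaner, route for this step is again via Eagon--Reiner: show that a gap produces a generator of $I(\KT)^\vee$ of the form (ii) in degree $<s$ (this is exactly statement $(*)$ in the proof of Theorem~\ref{TH.ASSPRIMES} combined with the degree formula (\ref{eq:deg})), contradicting unmixedness --- but this only rules out gaps that keep $I(\KT)^\vee$ non-equigenerated, and when $s>1$ with $d\ge 2$ a gap already destroys unmixedness, so in fact \emph{unmixedness alone forces no gaps once $s\ge 2$ and $d\ge 2$}, and the only surviving case is $s=1$, which makes $I(\KT)$ principal. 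So the real content reduces to: unmixed $+$ not principal $\Rightarrow$ $s\ge 2$ or $d=1$, and unmixed with $s\ge2,\ d\ge2$ $\Rightarrow$ no gaps; then the ``if'' direction's Eagon--Reiner computation closes the loop. The main work is verifying that condition (\ref{condition:2}) genuinely forbids gaps under equigeneration, which is a careful but finite check on the formulas in Theorem~\ref{TH.ASSPRIMES}.
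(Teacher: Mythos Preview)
Your proposal has genuine errors in both directions, and the paper's approach is quite different.

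\textbf{The ``if'' direction is broken.} Your claim that under $n_j=s$ and $i_{j+1}-i_j=t_j$ the dual $I(\KT)^\vee$ has no generators of type (ii) is false. Condition~(\ref{condition:2}) with these hypotheses reads $q'_\ell+q_{\ell-1}=s-2$, which has nonnegative solutions for every $s\ge 2$. Concretely, take $d=2$, $s=2$, $t_1=2$, $V_1=\{1,2\}$, $V_2=\{3,4\}$: then $I=(x_1x_3,x_1x_4,x_2x_4)$ and $I^\vee=(x_1x_2,\,x_3x_4,\,x_1x_4)$, with $x_1x_4$ of type (ii). Your formula $I^\vee=\prod_i\mathfrak p_i$ is also wrong (and inconsistent with your own description: the type-(i) generators alone give a \emph{complete intersection} $(m_1,\dots,m_d)$ with $m_i=\prod_{k\in V_i}x_k$, not a product of primes). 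Worse, the complete intersection $(m_1,\dots,m_d)$ has Koszul resolution with $\beta_{i,is}\neq 0$, hence is \emph{not} linear for $s\ge 2$, $d\ge 2$; so even if your description of $I^\vee$ were right, Eagon--Reiner would fail to give Cohen--Macaulayness. The type-(ii) generators are precisely what make $I^\vee$ have linear resolution, and you would have to analyze them.

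\textbf{The ``only if'' direction is also broken.} Your ``cleaner'' alternative asserts that for $s\ge 2$, $d\ge 2$ a gap already destroys unmixedness. This contradicts Theorem~\ref{thm:unmixed}, which explicitly allows gaps in the unmixed case. For instance $d=2$, $s=2$, $t_1=1$, $V_1=\{1,2\}$, $V_2=\{4,5\}$ gives $I=(x_1,x_2)(x_4,x_5)$, which is unmixed of height $2$ but not Cohen--Macaulay (it is $I(C_4)$). Your first approach (factor $I=I'\cdot I''$ across a gap and do depth bookkeeping) is correct in spirit and can be made to work --- since $I'\otimes_\KK I''$ has $\pd_S(I' I'')=\pd_{S'}I'+\pd_{S''}I''$ via tensor product of resolutions, one gets $\pd(S/I)\ge 2(s-1)+1>s$ when both factors are non-principal --- but you have not carried it out, and the sentence ``unmixedness alone forces no gaps'' shows you abandoned it for the wrong alternative.

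\textbf{What the paper does instead.} The paper avoids Eagon--Reiner entirely. Since $I$ has linear quotients (Theorem~\ref{thm:linearquotient}), \cite[Corollary 1.6]{HT} gives $\pd(S/I)=q(I)+1$, so Cohen--Macaulayness is equivalent to $\mathrm{ht}(I)=q(I)+1$. For the ``if'' direction one computes directly from Proposition~\ref{prop:set} that $q(u)=k_d-i_1-\sum t_j$ for $u=x_{k_1}\cdots x_{k_d}$, whence $q(I)=s-1=\mathrm{ht}(I)-1$. For ``only if'', unmixedness gives $n_j=s$; if some $j$ has a gap, one exhibits $v=\prod_j x_{i_j+s-1}$ with $|\set(v)\cap V_1|=|\set(v)\cap V_{j+1}|=s-1$, so $q(I)\ge 2(s-1)>s-1$, contradicting $\mathrm{ht}(I)=q(I)+1$. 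This is both shorter and avoids the delicate structure of $I^\vee$.
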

	
	\begin{proof}
		Let $I=I(\KT)$ and $S$ be the ambient ring of $I$.  Since $I$ has linear quotients, thanks to Theorem~\ref{thm:linearquotient},  it follows from \cite[Corollary 1.6]{HT} that the length of the minimal free resolution of $S/I$ over $S$ is equal to $q(I) + 1$. This implies that  $\depth (S/I) =|V(\KT)|-q(I)-1$. Moreover, $\dim (S/I)= |V(\KT)|-\mathrm{ht}(I)$, where $\mathrm{ht}(I)$ denotes  the height of $I$. 
	 This summarizes to $S/I$ is Cohen–Macaulay if and only if $\mathrm{ht}(I) = q(I) + 1$. 
	  Therefore,  it is enough to show that $\mathrm{ht}(I)  = q(I) + 1$ if and only if $n_1=n_2=\cdots= n_d=s$ and $i_{j+1}-i_{j}=t_j$ for each $j=1, \ldots, d-1$. 
		
		If $I$ is a principal ideal then $S/I$ is Cohen Macaulay. Now, assume $n_1=n_2=\cdots= n_d=s$ and $i_{j+1}-i_{j}=t_j$ for each $j=1, \ldots, d-1$. Let $u=x_{k_1}\cdots x_{k_d} \in \G(I)$, 
		 where $k_i \in V_i$   for all $i=1, \ldots, d$. Since $[i_1, k_1-1] \subseteq  V_1$ and $[k_{j-1}+t_{j-1}, k_j-1]\subseteq  V_j$ for all $j=2, \ldots, d$, by Proposition~\ref{prop:set}, we obtain $q(u)=k_d-i_1-\sum_{j=1}^{d-1}t_j$. This shows that the maximum value of $q(u)$ is obtained when $k_d$ takes the maximum possible value which is $\max V_d=i_d+s-1$. Furthermore, using $i_{j+1}-i_j=t_j$ for all $j=1,\ldots,d-1$,  this gives that  $i_d=i_1+\sum_{j=1}^{d-1}t_j$. Hence,  we have   $q(I)=s-1$, as required.

		Conversely, suppose $S/I$ is Cohen-Macaulay, that is, $\mathrm{ht}(I)  = q(I) + 1$.
	 It follows from $\mathrm{ht}(I)  = q(I) + 1$ that  $I$ is unmixed and by using Proposition \ref{thm:unmixed}, 
	this  yields that, for all $j=1, \ldots, d$,  we have  $n_j=s$, and  for each $j=1, \ldots, d-1$ either $i_{j+1} - (i_{j}+s-1) > t_j-1$  or $i_{j+1}-i_j=t_j$. Then $\mathrm{ht}(I)=s$  thanks to Proposition~\ref{Prop:height}. If $s=1$, then $I$ is a principal ideal. Now, let $s>1$.  We only need to show that, for each $j=1, \ldots, d-1$, we have $i_{j+1}-i_j=t_j$. Suppose that for some $j$ we have $i_{j+1} - (i_{j}+s-1) > t_j-1$ . Let $v=x_{i_1+s-1}x_{i_2+s-1}\cdots x_{i_d+s-1}$. Then $v \in \G(I)$ because $\KT$ do not contain isolated vertices and $\{i_1+s-1, i_2+s-1, \ldots, i_d+s-1\}$ is a $\tb$-spread edge in $\KT$. Now, 
	 Proposition~\ref{prop:set} gives that $ \set(v)\cap V_1 = [i_1, i_1+s-2]$ and $ \set(v)\cap V_{j+1} = \{i_{j+1}, \ldots, i_{j+1}+s-2\}$. This shows that $q(v) > 2(s-1)$ and $q(I)+1 > \mathrm{ht}(I)=s$, a contradiction.
	\end{proof}



\end{document}